\newtheorem{theorem}{Theorem}
\newtheorem{definition}{Definition}[section]
\newtheorem{lem}[theorem]{Lemma}
\newtheorem{proposition}[theorem]{Proposition}
\newtheorem{corollary}[theorem]{Corollary}
\newtheorem*{theorem*}{Theorem}
\newtheorem*{corollary*}{Corollary}
\newtheorem*{theoremA}{Theorem A}
\newtheorem*{theoremB}{Theorem B}
\newtheorem*{remark*}{Remark}
\newtheorem*{proposition*}{Proposition}
\newtheorem*{question}{Question}
\title[Random quadratic polynomials]{Total disconnectedness of Julia sets of random quadratic polynomials}
\author{Krzysztof Lech}
\author{Anna Zdunik}
\address{Institute of Mathematics, University of Warsaw,
ul.~Banacha~2, 02-097 Warszawa, Poland}
\thanks{The research  was supported in part by the National Science Centre, Poland, grant no 2018/31/B/ST1/02495.}
\begin{document}
\begin{abstract}
For a sequence of complex parameters $\{ c_n \}$ we consider the compositions of functions $f_{c_n} (z) = z^2 + c_n$, 
which is the non-autonomous version of the classical quadratic dynamical system. 
The definitions of Julia and Fatou sets are naturally generalized to this setting.
We answer a question posed by Brück, B\"uger and Reitz, whether the Julia set for such a sequence is almost always totally disconnected,
if the values $c_n$ are chosen randomly from a large disk. Our proof is easily generalized to answer a lot of other related questions regarding typical connectivity of the random Julia set. In fact we prove the statement for a much larger family of sets than just disks, in particular if one picks $c_n$ randomly from the main cardioid of the Mandelbrot set, then the Julia set is still almost always totally disconnected.
\end{abstract}

\maketitle

\section{Introduction}\label{sec:intro}
We consider non--autonomous compositions of quadratic polynomials $f_c=z^2+c$, where, at each step  $c$ is chosen randomly
from some bounded Borel $V\subset \mathbb C$ (e.g., the disc $\mathbb D(0,R)$).
Let us introduce the parameter space
$\Omega=V^\mathbb N.$
The space $\Omega$ is equipped with a natural left shift map $\sigma$.
Namely, 
for every $\omega\in\Omega$, $\omega=(c_0,c_1, c_2,\dots)$ put
$$\sigma(\omega)=(c_1, c_2, \dots).$$

\noindent Next, for every $\omega\in\Omega$, $\omega=(c_0,c_1,\dots)$ denote by  $f_\omega$ the map $f_{c_0}$.

\noindent Then the  non-autonomous composition  $f^n_\omega$ is given by the formula

$$f^n_\omega:=f_{c_{n-1}}\circ f_{c_{n-2}}\circ \dots \circ f_{c_0}.$$

\noindent The global dynamics can be described as a skew product
$F:\Omega\times \mathbb C\to \Omega\times\mathbb C,$
$$F(\omega, z)=\left (\sigma(\omega), f_\omega(z)\right ).$$

\noindent Then, for all $n\in\mathbb N$, we have that

$$F^n(\omega,z)=\left (\sigma^n(\omega), f_\omega^n(z)\right ).$$

So, every sequence $\omega\in\Omega$ determines a sequence of non-autonomous iterates:
$\left (f_\omega^n\right )_{n\in\mathbb N}.$

Let $\mu$ be a Borel probability measure on $V$. We denote by $\mathbb P$ the product distribution on $\Omega$ generated by $\mu$.
Then $(\Omega,\mathbb P)$ becomes a measurable space, and $\sigma:\Omega\to\Omega$ is an ergodic
measure preserving endomorphism.

\

Analogously to the autonomous case, it is natural to consider the following objects:

\begin{itemize}
\item{} (\emph{escaping set}, or \emph{basin of infinity}) $$\mathcal A_\omega=\{z\in \mathbb C:f^n_\omega(z)\xrightarrow[n\to\infty]{} \infty\}$$
\item{} (non-autonomous Julia set) 
\begin{equation}\label{eq:julia}
J_\omega=\{z\in\mathbb C: \text{for every open set }~  U\ni z~\text{the family}~{f^n_\omega}_{|U}~  \text{is not normal.}\}
\end{equation}
\item{} (non- autonomous filled-in Julia set) 
\begin{equation}\label{eq:filled}
K_\omega=\mathbb C\setminus \mathcal A_\omega.
\end{equation}
\end{itemize}

\

The following proposition, which can be found in \cite{B} (Theorem 1) is analogous to the autonomous case.

\begin{proposition} Let $\omega\in \mathbb D(0,R)^\mathbb N$. Then 
$$J_\omega=\partial\mathcal A_\omega,$$
\end{proposition}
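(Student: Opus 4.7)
The plan is to mimic the classical autonomous proof, exploiting the fact that parameters are confined to a bounded set $\mathbb D(0,R)$ to obtain a uniform escape radius.

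First, I would establish an \emph{escape radius} $R^\ast>0$ depending only on $R$, chosen large enough so that $|z|>R^\ast$ and $|c|\le R$ imply $|f_c(z)|\ge|z|^2-R\ge|z|+1$. Iterating this estimate, once an orbit exceeds $R^\ast$ at some step, it escapes to infinity at a uniformly fast rate, regardless of which $\omega$ is chosen. In particular $\mathcal A_\omega=\bigcup_n (f^n_\omega)^{-1}\bigl(\{|z|>R^\ast\}\bigr)$ is open (as a preimage of an open set under a continuous map, unioned over $n$), and for every $z\in K_\omega$ we have $|f^n_\omega(z)|\le R^\ast$ for all $n$.

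Next I would prove the two inclusions separately. For $\partial\mathcal A_\omega\subseteq J_\omega$, take $z_0\in\partial\mathcal A_\omega$ and any neighborhood $U\ni z_0$. Then $U$ contains both points where $f^n_\omega\to\infty$ and points whose orbits stay in $\overline{\mathbb D(0,R^\ast)}$. If $\{f^n_\omega|_U\}$ were normal, some subsequence would converge locally uniformly to a map $g$ with values in $\widehat{\mathbb C}$; the limit would have to be constantly $\infty$ on $U$ (because escaping points form an open set on which convergence to $\infty$ is locally uniform, by the escape estimate) yet also finite at some point of $U$, a contradiction. Hence $z_0\in J_\omega$.

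For the reverse inclusion $J_\omega\subseteq\partial\mathcal A_\omega$, suppose $z_0\notin\partial\mathcal A_\omega$. Since $\mathcal A_\omega$ is open, either $z_0\in\operatorname{int}\mathcal A_\omega$ or $z_0\in\operatorname{int} K_\omega$. In the second case, a neighborhood $U\subseteq K_\omega$ satisfies $|f^n_\omega|\le R^\ast$ on $U$ for every $n$, and Montel's theorem yields normality. In the first case, on a relatively compact neighborhood $U\Subset\mathcal A_\omega$, compactness combined with the escape estimate gives a uniform $N$ such that $|f^N_\omega|>R^\ast$ on $\overline U$; the uniform escape rate then forces $f^n_\omega\to\infty$ locally uniformly on $U$, which again means normality. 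Either way $z_0\notin J_\omega$, completing the proof.

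The only non-routine step is securing the uniform escape rate and the resulting local uniform convergence on $\operatorname{int}\mathcal A_\omega$; everything else reduces to Montel and openness of $\mathcal A_\omega$. The boundedness hypothesis $c_n\in\mathbb D(0,R)$ is essential precisely here, to produce the $\omega$-independent escape radius.
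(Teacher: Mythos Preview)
The paper does not give its own proof of this proposition; it simply records it as Theorem~1 of B\"uger~\cite{B} and moves on. Your argument is correct and is exactly the standard one---a uniform escape radius from the boundedness of the parameters, then Montel on $\operatorname{int}K_\omega$ and uniform divergence to $\infty$ on compacta of $\mathcal A_\omega$---so there is nothing in the present paper to compare it against. One small point worth making explicit in your write-up: when you pass from ``$g\equiv\infty$ on the open set $U\cap\mathcal A_\omega$'' to ``$g\equiv\infty$ on $U$'', take $U$ connected (e.g.\ a disk) so the identity principle applies cleanly.
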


Let us also note the following straightforward observations:

\begin{proposition}
For every $\omega\in\Omega$
\begin{itemize}
\item{} $J_{\sigma\omega}=f_\omega(J_\omega),$
\item{} $\mathcal A_{\sigma\omega}=f_\omega(\mathcal A_{\omega}).$
\end{itemize}
\end{proposition}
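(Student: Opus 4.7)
The plan is to reduce both identities to the cocycle relation $f^{n}_\omega=f^{n-1}_{\sigma\omega}\circ f_\omega$ (valid for $n\ge 1$), which simply records that the first step of the non-autonomous iteration starting from $\omega$ is $f_\omega=f_{c_0}$, after which the dynamics continues as iteration governed by $\sigma\omega$. The escape-set identity then drops out in one line, and the Julia-set identity follows by combining it with the preceding proposition (applicable here because the parameter set $V$ is bounded, so $V\subseteq\mathbb D(0,R)$ for some $R$) together with a short topological argument showing that $f_\omega$ sends boundary onto boundary.

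For the first bullet, the cocycle relation gives $f^n_\omega(z)\to\infty$ iff $f^{n-1}_{\sigma\omega}(f_\omega(z))\to\infty$, that is,
\[
z\in\mathcal A_\omega \ \Longleftrightarrow\ f_\omega(z)\in\mathcal A_{\sigma\omega},
\]
so $\mathcal A_\omega = f_\omega^{-1}(\mathcal A_{\sigma\omega})$. Since $f_\omega$ is a nonconstant polynomial, it is surjective on $\mathbb C$, and applying $f_\omega$ to both sides yields $f_\omega(\mathcal A_\omega)=\mathcal A_{\sigma\omega}$.

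For the second bullet I would work with the filled-in Julia set $K_\omega=\mathbb C\setminus\mathcal A_\omega$; boundedness of $V$ gives a uniform escape radius, so $K_\omega$ is compact, and the preceding proposition yields $J_\omega=\partial K_\omega$. The first bullet immediately implies $f_\omega(K_\omega)=K_{\sigma\omega}$, so the claim reduces to showing that the polynomial $f_\omega$ maps $\partial K_\omega$ \emph{onto} $\partial K_{\sigma\omega}$. For the inclusion $f_\omega(\partial K_\omega)\subseteq\partial K_{\sigma\omega}$, any $z\in\partial K_\omega$ is a limit of points $z_k\in\mathcal A_\omega$, and continuity of $f_\omega$ together with the escape-set identity forces $f_\omega(z)\in\partial K_{\sigma\omega}$. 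For the reverse inclusion, given $w\in\partial K_{\sigma\omega}$ I pick any preimage $z\in K_\omega$ of $w$, approximate $w$ by $w_k\in\mathcal A_{\sigma\omega}$, and lift to $z_k\to z$ using an inverse branch of $f_\omega$ near $z$; by the escape-set identity each $z_k\in\mathcal A_\omega$, whence $z\in\partial K_\omega$.

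The only subtlety is the lifting step when $z=0$ is the critical point of $f_\omega$ (equivalently $w=c_0$): there $f_\omega$ has no single-valued local inverse, and one must argue directly that both branches $z_k=\pm\sqrt{w_k-c_0}$ tend to $0$, so any choice suffices. A secondary routine check is compactness of $K_\omega$, which follows from the standard estimate $|f_c(z)|\ge 2|z|$ for $|z|$ larger than a constant depending only on $\sup_{c\in V}|c|$. Apart from these routine points, the proof is essentially the one-line cocycle computation.
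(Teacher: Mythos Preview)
Your proof is correct. The paper itself offers no proof: the proposition is stated as a ``straightforward observation'' immediately after the definition and is left without argument, so there is nothing to compare your approach against.

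One stylistic remark: your lifting argument for the reverse inclusion $\partial K_{\sigma\omega}\subseteq f_\omega(\partial K_\omega)$ can be replaced by the single observation that a nonconstant holomorphic map is open. Since $K_\omega=f_\omega^{-1}(K_{\sigma\omega})$ and $\mathcal A_\omega=f_\omega^{-1}(\mathcal A_{\sigma\omega})$, continuity and openness of $f_\omega$ together give $\partial K_\omega=f_\omega^{-1}(\partial K_{\sigma\omega})$ directly (any neighbourhood $U$ of $z$ maps to an open neighbourhood $f_\omega(U)$ of $f_\omega(z)$, which then meets both $K_{\sigma\omega}$ and $\mathcal A_{\sigma\omega}$, so $U$ meets both preimages). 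Surjectivity of $f_\omega$ then yields $f_\omega(\partial K_\omega)=\partial K_{\sigma\omega}$ without any branch analysis at the critical point. Your version is fine; this just avoids the case split.
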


\

The study of iterates of non-autonomous and random rational maps, and, in particular, non- autonomous and random polynomials, originated by the seminal 
paper \cite{FS} by J. Fornæss and N. Sibony. It was since developed by many authors.

A systematic study of dynamics on non- autonomous and random dynamics of quadratic polynomials was done by R. Br\"uck, M. B\"uger, S.Reitz, see \cite{BBR, RB, B}. Some other results related to random polynomial dynamics in general have also been achieved by M. Comerford, in \cite{COM}, \cite{COM2}. Finally in \cite{MSU} V. Mayer, M. Skorulski and M. Urbański among other results confirm a conjecture by R. Br\"uck and M. B\"uger, concerning the typical Hausdorff dimension of a certain random quadratic Julia set. 

In \cite{BBR} the authors focus on the question of the connectedness of the Julia set, giving, among other results, a transparent sufficient and necessary condition for the Julia set to be connected:

\begin{theorem*}[Theorem 1.1. in \cite{BBR}]
Let $\omega\in\mathbb D(0,R)^\mathbb N$, $R>0$. The Julia set $J_\omega$ is disconnected if and only if there exists $k\in\mathbb N$ such that
$$f^n_{\sigma^k\omega}(0)\xrightarrow[n\to\infty]{}\infty.$$
\end{theorem*}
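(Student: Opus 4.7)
The plan is to recast the theorem in terms of the filled Julia set: since $J_\omega=\partial K_\omega$ (from the preceding proposition), in this setting one verifies that $J_\omega$ is connected if and only if $K_\omega$ is connected, so it suffices to characterise connectedness of $K_\omega$. Fix a radius $R$ so large that $V\subset\overline{\mathbb D(0,R)}$ and that any orbit leaving $\overline{\mathbb D(0,R)}$ tends to $\infty$. Then
\[
K_\omega \;=\; \bigcap_{n\ge 0} E_n(\omega), \qquad E_n(\omega) := (f^n_\omega)^{-1}\bigl(\overline{\mathbb D(0,R)}\bigr),
\]
is a decreasing intersection of non-empty compact sets, and I will analyse their topology by induction on $n$.

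The heart of the proof is the recursion $E_n(\omega)=f_{c_0}^{-1}\bigl(E_{n-1}(\sigma\omega)\bigr)$. Because $f_{c_0}(z)=z^2+c_0$ is a degree-two branched cover whose sole critical value is $c_0$, the preimage under $f_{c_0}$ of a closed topological disk $A$ is a single topological disk (containing the critical point $0$) when $c_0$ lies in the interior of $A$, and a pair of disjoint topological disks when $c_0$ lies outside $A$; preimages of disjoint unions decompose componentwise. Inductively, each $E_n(\omega)$ is a finite disjoint union of closed topological disks, and $E_n(\omega)$ is \emph{connected} precisely when at every pullback stage one is in the first case, i.e.\ $c_j\in E_{n-j-1}(\sigma^{j+1}\omega)$ for $j=0,\dots,n-1$. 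Substituting $c_j=f_{c_j}(0)$, this translates to
\[
f^{n-j}_{\sigma^j\omega}(0)\in \overline{\mathbb D(0,R)} \qquad (j=0,\dots,n-1),
\]
and letting $n$ vary shows that \emph{every} $E_n(\omega)$ is connected if and only if for every $k\ge 0$ the critical orbit $\bigl(f^m_{\sigma^k\omega}(0)\bigr)_{m\ge 0}$ stays in $\overline{\mathbb D(0,R)}$ --- by the choice of $R$, equivalent to none of these orbits escaping to $\infty$.

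It remains to pass from the $E_n$'s to $K_\omega$ itself. In one direction, if every $E_n$ is connected then $K_\omega$ is a nested intersection of compact connected sets in the plane, hence connected. Conversely, suppose some $E_{n_0}$ has components $C_1,\dots,C_r$ with $r\ge 2$. The inductive construction shows that each $C_i$ is a compact topological disk on which $f^{n_0}_\omega$ restricts to a proper holomorphic surjection onto $\overline{\mathbb D(0,R)}$; choosing any point $w$ of the non-empty compact set $K_{\sigma^{n_0}\omega}\subset\overline{\mathbb D(0,R)}$ and lifting $w$ through $f^{n_0}_\omega$ gives a point $z_i\in C_i$ with $f^{n_0}_\omega(z_i)=w\in K_{\sigma^{n_0}\omega}$, hence $z_i\in K_\omega$, so $K_\omega$ meets every component and is disconnected. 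The main delicate point in the whole argument is precisely this bookkeeping of the iterative pullback: I must keep track at each stage both of which component of $E_n$ contains the relevant critical value and of the fact that every component continues to surject onto the base disk, so that in the disconnected case the non-empty set $K_{\sigma^{n_0}\omega}$ produces points of $K_\omega$ in every component. Combining the two directions yields the equivalence claimed in the theorem.
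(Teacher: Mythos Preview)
The paper does not contain its own proof of this statement: it is quoted verbatim from \cite{BBR} (Theorem~1.1 there) as background, so there is nothing in the paper to compare your argument against.

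That said, your outline is the standard and correct approach, with two points that deserve tightening. First, a notational clash: in the statement $R$ is already the radius bounding the parameters, so the escape radius you introduce should carry a different name (the paper uses $R_0$). Second, and more substantively, your dichotomy ``the preimage of a closed topological disk $A$ under $f_{c_0}$ is one disk if $c_0\in\operatorname{int}A$ and two disks if $c_0\notin A$'' omits the case $c_0\in\partial A$, where the preimage is a pinched figure-eight and your inductive description of $E_n(\omega)$ as a disjoint union of closed topological disks breaks down. This is not a fatal gap: for a fixed $\omega$ the countable set of moduli $\{|f^m_{\sigma^k\omega}(0)|:k,m\ge 0\}$ avoids almost every radius, so you may choose the escape radius $R_0$ so that no critical image ever lands on $\partial\mathbb D(0,R_0)$, and then the boundary case never occurs in the pullback. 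With that adjustment (and the observation that the equivalence ``critical orbit bounded $\Leftrightarrow$ stays in $\overline{\mathbb D(0,R_0)}$'' is insensitive to the particular choice of large $R_0$), your argument goes through. The rest of your bookkeeping---in particular the surjectivity of $f^{n_0}_\omega$ on each component, used to show that $K_\omega$ meets every piece of a disconnected $E_{n_0}$---is correct and is exactly the step one needs.
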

 Note that the point $0$ plays a special role, since it is a common critical points of all maps $f_c$.
 Recall that in the autonomous case, i.e, the iterates of a single map  $f_c$,  the Julia set is disconnected
if and only if $f^n_c(0)\xrightarrow[n\to\infty]{}\infty$. Moreover, if the Julia set $J(f_c)$ is disconnected, then it is totally disconnected. The last statement is no longer true in the non-autonomous case considered here; for example, one can easily construct sequences $\omega$ for which $J_\omega$ has finitely many connected components.

\

Looking at the above characterization of connected Julia sets $J_\omega$, one may conjecture that the condition 
$$f^n_{\sigma^k\omega}(0)\xrightarrow[n\to\infty]{}\infty\quad\text{for every}\quad  k\in\mathbb N$$ is the right characterization of totally disconnected Julia sets $J_\omega$.

However, this condition is  neither necessary nor sufficient. Indeed, in \cite{BBR} the authors 
construct  an example of a sequence $\omega\in\mathbb D(0,R)^\mathbb N$ such that for every $k\in\mathbb N$ 
$f^n_{\sigma^k\omega}(0)\to \infty$ as $n\to\infty$, but the Julia set $J_\omega$ is not totally 
disconnected (see Example 4.4 in \cite{BBR}). On the other hand,  Example 4.5 in the same paper shows 
that the Julia set may be totally disconnected even if for infinitely many $k\in \mathbb N$ $f^n_{\sigma^k
\omega}(0)$ does not tend to infinity as $n\to\infty$.

Clearly, the behaviour of the (typical)  dynamics depends on the domain from which the parameters $c_n$ are chosen. In particular in case of a disk $\mathbb D(0, R)$, the dynamics depend on $R$. 

If $R\le 1/4$ then for every $\omega\in \mathbb D^\mathbb N$, $\omega=(c_i)_{i=0}^\infty$, the Julia set $J_\omega$ is connected (see Remark 1.2 in \cite{BBR}). Note that in this case all parameters $c_i$ are chosen from the main cardioid in the Mandelbrot set.

For $R>1/4$ the situation changes drastically. Indeed, the disc $\mathbb D(0,R)$ now contains parameters from the complement of the Mandelbrot set $\mathcal{M}$. So, it is evident that putting, for instance, $\omega=(c,c,c,\dots)$, where $c\in \mathbb {D}(0,R)\setminus\mathcal M$, one obtains a totally disconnected Julia set $J_\omega$.

This motivates the following question, which was raised in  \cite{BBR} and \cite{RB}: what is a typical behaviour of the Julia set $J_\omega$, in terms of connectedness?
 More formally, in   \cite{BBR} and \cite{RB} the authors introduce  subsets of $\Omega=\mathbb D(0,R)^{\mathbb N}$, denoted by  $\mathcal D$, $\mathcal D_N$, $\mathcal D_\infty$, $\mathcal T$, and described in terms of connectedness:
$$\mathcal D=\{\omega\in\Omega: J_\omega\quad\text{is disconnected}\}$$
$$\mathcal D_N=\{\omega\in\Omega: J_\omega\quad\text{has at least $N$ connected components}\}$$ 
$$\mathcal D_\infty=\{\omega\in\Omega: J_\omega\quad\text{has infinitely many connected  components}\}$$
$$\mathcal T=\{\omega\in\Omega: J_\omega\quad\text{is totally disconnected}\}$$
$$\mathcal F=\{\omega\in\Omega: \forall k\in\mathbb N ~  f^n_{\sigma^k\omega}(0)\xrightarrow[n\to\infty]{}\infty\}$$
Clearly, $\mathcal D\supset \mathcal D_N\supset \mathcal D_\infty\supset \mathcal T$. But, as mentioned above, the set $\mathcal F$ is neither contained in nor it contains $\mathcal T$.

Here, typicality may be understood in topological or metric sense. The space $\Omega=\mathbb D(0,R)^\mathbb N$ carries the natural product topology
induced by the standard topology on $\mathbb D(0,R)$.
Note that this topology is completely metrizable.

The space $\Omega$ also carries the natural product measure $\mathbb P:=\otimes_{n=0}^\infty \lambda_R$
where each $\lambda_R$ is the normalized Lebesgue measure on $\mathbb D(0,R)$.
In \cite{BBR} the authors prove that $\mathbb P(\mathcal D)=1$ (Theorem 2.3 in \cite{BBR}). 
It can be  deduced from the proof, in a rather straightforward way, that $\mathbb P(\mathcal F)=1$ and also (although it is 
not explicitly stated in the paper) that $\mathbb P(\mathcal D_\infty)=1$.

The work \cite{RB} deals with topological aspects of typicality of the above sets.
In particular, the author proves  (assuming $R>1/4$) that

\begin{itemize}
\item{} the set $\mathcal T$ is dense in $\Omega$ (Theorem 1.1 )
\item{} the set $\mathcal D_\infty$ has empty interior in $\Omega$ (Theorem 1.2)
\item{} for every $N\in\mathbb N$ the set $\mathcal D_N$ is an open dense subset of $\Omega$, which immediately implies that
\item{} the set $\mathcal D_\infty$ is of the second Baire category.
\end{itemize}
In \cite{RB} the author asked if the set $\mathcal T$  is also of the second Baire category. This question was positively answered by Z. Gong, W. Qiu and Y. Li in \cite{QIU}. 

\

However, the question about metric typicality of $\mathcal T$,  formulated in \cite{BBR}  remained open until now:

\begin{question}{{\rm [BBR]}}
Is it true that $\mathbb P(\mathcal T)=1 $ provided that $R>1/4$ is large enough?
\end{question}

In this paper, we answer the above  question  positively, providing, moreover a number of  stronger statements. Precisely, we prove the following.

\begin{theoremA}\label{thm:A}
Let $R>1/4$. Consider $\Omega=\mathbb D(0,R)^\mathbb N$ equipped with the product distribution $\mathbb P:=\otimes_{n=0}^\infty \lambda_R$.  Let 
$$\mathcal T=\{\omega\in\Omega: J_\omega\quad\text{is totally disconnected}\}$$
Then $\mathbb P (\mathcal T)=1$.
\end{theoremA}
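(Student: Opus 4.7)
The plan splits into two stages. First, I reduce to showing $\mathbb P(\mathcal T) > 0$ via Kolmogorov's $0$--$1$ law by proving that $\mathcal T$ is a tail event. Second, I construct a positive-probability event on which $K_\omega$ (and hence $J_\omega$) is totally disconnected.

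For the tail property, combining $\mathcal A_\omega = f_{c_0}^{-1}(\mathcal A_{\sigma\omega})$ with the open mapping theorem gives $J_\omega = f_{c_0}^{-1}(J_{\sigma\omega})$; moreover $J_\omega$ is symmetric under $z\mapsto -z$ (since $f_{c_0}(z) = f_{c_0}(-z)$), and thus $J_{\sigma\omega}$ is homeomorphic to the quotient $J_\omega/\{z\sim -z\}$. Preimages of totally disconnected sets under non-constant polynomials remain totally disconnected (each connected component of the preimage must map to a single point and, since the polynomial has finite fibers, must itself be a single point), giving $\sigma\omega \in \mathcal T \Rightarrow \omega \in \mathcal T$. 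Conversely, when $J_\omega$ is totally disconnected, a short symmetrization argument on clopen separating sets (given $y_1 \ne y_2$ in the quotient, take a clopen $U \subset J_\omega$ separating their lifts and form the symmetric clopen $U \cap (-U)$) shows the quotient $J_\omega/\{z\sim -z\}$ is also totally disconnected, so $\omega \in \mathcal T \Rightarrow \sigma\omega \in \mathcal T$. Iterating, $\mathcal T$ is invariant under changes of any finite number of coordinates, hence a tail event; Kolmogorov's $0$--$1$ law yields $\mathbb P(\mathcal T) \in \{0,1\}$.

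To exhibit a positive-probability event in $\mathcal T$, fix $R' > R$ with $|f_c(z)| \ge 2|z|$ for $|z| \ge R'$ and $|c| \le R$, so that $K_\omega \subset \overline{\mathbb D(0,R')}$ for every $\omega$. Let $K^n_\omega := (f^n_\omega)^{-1}(\overline{\mathbb D(0,R')})$, and for $z \in K_\omega$ let $W_n(z)$ denote the connected component of $K^n_\omega$ containing $z$. By Riemann--Hurwitz each $W_n(z)$ is a topological disk, and $\bigcap_n W_n(z)$ is the component of $z$ in $K_\omega$; by the Gr\"otzsch modulus inequality it suffices to produce annuli $A_n \subset W_n(z) \setminus \overline{W_{n+1}(z)}$ separating $W_{n+1}(z)$ from $\partial W_n(z)$ with $\sum_n \mathrm{mod}(A_n) = \infty$. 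The recursion $K^{n+1}_\omega = (f^n_\omega)^{-1}\bigl(f_{c_n}^{-1}(\overline{\mathbb D(0,R')})\bigr)$ exhibits at each step an outer annular shell: the preimage of the annulus $\overline{\mathbb D(0,R')} \setminus f_{c_n}^{-1}(\overline{\mathbb D(0,R')})$. A direct computation (using $|c_n| \le R$ and our choice of $R'$) shows the target annulus has modulus bounded below by a constant $m_* > 0$ uniformly in $c_n \in \mathbb D(0,R)$; its pullback to $W_n(z)$ is an annulus of modulus $m_*/d_n(z)$, where $d_n(z) := \deg(f^n_\omega : W_n(z) \to \overline{\mathbb D(0,R')})$.

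The principal obstacle is ensuring $\sum_n 1/d_n(z) = \infty$ uniformly in $z \in K_\omega$. By Riemann--Hurwitz $d_n(z) - 1$ equals the number of critical points of $f^n_\omega$ inside $W_n(z)$, counted with multiplicity, and these critical points are the pre-images $(f^k_\omega)^{-1}(0)$ for $0 \le k < n$; thus $d_n(z)$ is controlled by how long the critical orbits $f^m_{\sigma^k\omega}(0)$ remain inside $\overline{\mathbb D(0,R')}$. Example~4.4 of \cite{BBR} shows that for pathological $\omega$ the critical orbits can escape just slowly enough to make $d_n(z)$ grow fast and $\sum_n 1/d_n(z) < \infty$. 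The key probabilistic step is therefore to use independence of the $c_k$ in a Borel--Cantelli argument — applied, for instance, to events of the form ``$|c_k|$ lies in a fixed annular neighborhood of $\partial\mathbb D(0,R)$'' that force super-exponential escape of $f^m_{\sigma^k\omega}(0)$ — showing that almost surely the critical orbits escape fast enough to keep $d_n(z)$ growing sufficiently slowly, forcing $\sum_n m_*/d_n(z) = \infty$ uniformly in $z$. Combined with the $0$--$1$ reduction, this gives $\mathbb P(\mathcal T) = 1$.
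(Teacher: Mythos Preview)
Your tail-event reduction via Kolmogorov's $0$--$1$ law is correct (with the fix $U\cup(-U)$ in place of $U\cap(-U)$, so that the symmetric clopen set actually contains both lifts of $y_1$) and is a pleasant alternative to the paper's ergodicity argument on the two-sided shift. The Gr\"otzsch framework you set up is likewise essentially the paper's Proposition~\ref{prop:sufficient}.

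The genuine gap is the ``key probabilistic step''. Events of the form ``$|c_k|$ lies near $\partial\mathbb D(0,R)$'' do \emph{not} force fast escape of $f^m_{\sigma^k\omega}(0)$: a favourable single coordinate $c_k$ says nothing about $c_{k+1},c_{k+2},\ldots$, and for $R$ close to $1/4$ those subsequent parameters can keep the critical orbit bounded forever (take them all in $\mathbb D(0,1/4)$, for instance). Bounding $d_n(z)$ requires knowing, for every $0\le i<n$, whether $0$ lies in $f^i_\omega(W_n(z))$, and that depends on the whole block $(c_i,c_{i+1},\ldots,c_{n-1})$; single-coordinate events cannot deliver this. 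That control is precisely the hard content of the theorem, and your sketch does not supply it. There is also a structural inconsistency: you announce a positive-probability event but then describe an almost-sure argument, which---if it worked---would make the $0$--$1$ reduction superfluous; and in fact no cheap positive-probability shortcut seems to be available here. The paper's resolution is to first establish the exponential tail $\mathbb P\bigl(g_\omega(0)<G/2^k\bigr)<e^{-\gamma k}$ (quoting Theorem~2.2 of \cite{BBR}), translate it via the Green's function into control over which pullback components can contain $0$ (Proposition~\ref{prop:A_k}), and then run Borel--Cantelli not on single-coordinate events but on the sets $E_k=\{g_{\sigma^{-k}\tilde\omega}(0)\le G/2^k\}$ in the two-sided extension $V^{\mathbb Z}$, transferring back to $\Omega$ by ergodicity to obtain, for a.e.\ $\omega$, infinitely many $k$ with $\sup_j\deg\bigl(f^k_\omega\colon D^k_j\to D\bigr)\le N$.
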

In other words, a typical (metrically) Julia set $J_\omega$ is totally disconnected.

One might expect that the phenomenon described in Theorem A is based on the fact that for $R>1/4$ the disc $\mathbb D(0,R)$ 
intersects the complement of the Mandelbrot set $\mathcal M$. However, the following generalization shows that the analogous statement holds true also for domains which 
are completely contained in the Mandelbrot set. Namely, we have the following generalization of Theorem A.

\begin{theoremB}
Let $V$ be an open and bounded set such that $\mathbb{D}(0,\frac{1}{4}) \subset V$
and $V \neq \mathbb{D}(0,\frac{1}{4})$. Consider the space $\Omega = V^{\mathbb{N}}$ equipped with the  product $\mathbb P$  of uniform distributions on $V$. 
Then for $\mathbb P$--almost every sequence $\omega\in\Omega$ the Julia set $J_\omega$ is totally disconnected.
\end{theoremB}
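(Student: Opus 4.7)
The plan is to combine a zero-one law with an explicit positive-probability construction, following the scheme used for Theorem A.

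I would first establish that $\mathcal T$ is a tail event of the product space $(\Omega,\mathbb P)$. Using $J_\omega = f_{c_0}^{-1}(J_{\sigma\omega})$, I claim $J_\omega$ is totally disconnected if and only if $J_{\sigma\omega}$ is. One direction is immediate: if $J_{\sigma\omega}$ is totally disconnected, any connected component $C$ of $J_\omega$ projects to a singleton in $J_{\sigma\omega}$ and lies in a fiber of size at most two, hence is itself a single point. For the converse, if two distinct $y_1, y_2$ lay in the same connected component of $J_{\sigma\omega}$, I would use total disconnectedness of $J_\omega$ to find a clopen $V_0 \subset J_\omega$ containing $f_{c_0}^{-1}(y_1)$ and disjoint from $f_{c_0}^{-1}(y_2)$, saturate it to $U := V_0 \cup (-V_0)$ under the deck involution $z \mapsto -z$ (well-defined on $J_\omega$ since $f_{c_0}(-z) = f_{c_0}(z)$), and verify that $f_{c_0}(U)$ is then a clopen subset of $J_{\sigma\omega}$ separating $y_1$ from $y_2$---contradicting that they share a component. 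Iterating over finite prefixes makes $\mathcal T$ a tail event, so Kolmogorov's zero-one law gives $\mathbb P(\mathcal T) \in \{0,1\}$.

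It now suffices to find a positive-measure event in $\mathcal T$. The hypothesis on $V$ supplies the geometric input: since $V$ is open, strictly contains $\mathbb D(0,1/4)$, and is not equal to it, the set $W := V \setminus \overline{\mathbb D(0,1/4)}$ is a nonempty open subset of $V$ and therefore has positive Lebesgue measure. Adapting the argument for Theorem A, I would construct a positive-measure open cylinder $A = A_0 \times V^{\mathbb N}$, with $A_0 \subset V^N$ for some chosen $N$, consisting of ``good'' initial blocks $(c_0,\ldots,c_{N-1})$. The block would force the composition $f^N_\omega$ to be a degree-$2^N$ covering over a disk $D$ large enough to contain every possible $K_{\omega'}$, $\omega' \in V^{\mathbb N}$ (such $D$ exists because $V$ is bounded), by using parameters in $W$ to drive the critical values of $f^N_\omega$ outside $D$. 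For such $\omega \in A$, the preimage $(f^N_\omega)^{-1}(D)$ splits $K_\omega$ into $2^N$ disjoint topological disks; iterating along the shift orbit using ergodic recurrence of good blocks, the mesh of these partitions tends to zero, so $J_\omega$ is totally disconnected.

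The main obstacle is the quantitative construction of $A_0$: producing an open, positive-measure set of initial blocks whose critical values uniformly escape $D$ for every tail in $V^{\mathbb N}$. This is delicate because $V$ may lie entirely inside the Mandelbrot set (for instance, the main cardioid), so no single parameter $c \in V$ gives an escaping orbit for $f_c$ on its own; the role of $W$ is precisely to supply parameters that---when composed in the right order with others from $\mathbb D(0,1/4)$---drive the non-autonomous critical orbit past the escape radius. An additional subtlety is that the last critical value of $f^N_\omega$ is $c_{N-1}$ itself, which always lies in $V \subset D$; handling this requires a finer pull-back analysis or exploiting further iterations of the tail, which is permitted by the shift-invariance property already established.
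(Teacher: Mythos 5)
Your first step (the reduction via a zero--one law) is correct in substance and is genuinely different from the paper: the equivalence ``$J_\omega$ totally disconnected $\iff$ $J_{\sigma\omega}$ totally disconnected'' does follow from $J_\omega=f_{c_0}^{-1}(J_{\sigma\omega})$, the symmetry $z\mapsto -z$, and the fact that in a compact totally disconnected set points are separated by clopen sets, so $\mathcal T$ is a tail event and (granted its measurability, which you do not address, and which the paper's route never needs) it suffices to prove $\mathbb P(\mathcal T)>0$. But the second half, which is the actual content of Theorem B, has a genuine gap. The event you propose cannot exist as described: the critical values of $f^N_\omega$ are the points $f^{N-i}_{\sigma^i\omega}(0)$, $i=0,\dots,N-1$, and the last of these is $c_{N-1}\in V\subset D$, so no choice of block makes $f^N_\omega$ an unbranched degree-$2^N$ covering over $D$; you flag this, but the ``finer pull-back analysis'' you defer to is precisely the heart of the proof. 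Worse, even a correct one-block construction would not give what you need at later times: whether a component of $(f^{k}_\omega)^{-1}(D)$ maps with small degree depends on whether the critical point $0$ is captured by intermediate components, i.e.\ on \emph{all} coordinates $c_0,\dots,c_{k-1}$, not on the recurrence of a fixed good block; ``ergodic recurrence of good blocks'' therefore does not bound the degrees of $f^{k}_\omega$ along a subsequence. Finally, even granted bounded degree at infinitely many times, ``the mesh of these partitions tends to zero'' is not automatic; it requires the nested-annuli/Gr\"otzsch modulus argument.

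For comparison, the paper fills exactly these holes: Lemma~\ref{lem:generalpar} shows that for \emph{every} $z$ one can steer the orbit out of $\mathbb D_{R_0}$ using parameters in $\mathbb D(0,\frac{1}{4})$ (exploiting the parabolic map $w^2+\frac14$) plus one parameter of $V$ outside $\overline{\mathbb D}(0,\frac14)$, giving the uniform exponential bound $\mathbb P(k(z,\omega)>k)\le e^{-\gamma k}$; via the Green's function estimates \eqref{eq:est_green} this yields the ``typically fast escaping'' property \eqref{eq: fast_escaping}; then Theorem~\ref{thm:main_step} converts this, through the sets $A_k$, the two-sided extension $V^{\mathbb Z}$ with backward trajectories, Borel--Cantelli and ergodicity of the shift, into the statement that for a.e.\ $\omega$ there are infinitely many $k$ at which all components of $(f^k_\omega)^{-1}(D)$ have degree $\le 2^K$ over $D$ (Proposition~\ref{prop:A_k}); and Proposition~\ref{prop:sufficient} supplies the moduli-of-annuli argument showing this forces total disconnectedness. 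None of these three mechanisms appears in your proposal, so as it stands the positive-probability step --- and hence the theorem --- is not proved; note also that your zero--one reduction buys little, since proving $\mathbb P(\mathcal T)>0$ for a general $V$ (e.g.\ the main cardioid) already requires essentially this full machinery.
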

Theorem B leads immediately to the following corollary.

\begin{corollary*}[Corollary~\ref{thm:cardioid}]
 Let $\Omega = B^{\mathbb{N}}$ where $B$ is the main cardioid of the Mandelbrot set, 
 and let $\Omega$ be equipped with the product of uniform distributions on $B$. Then for almost every sequence $\omega \in \Omega$ 
 the Julia set $J_\omega$ is totally disconnected.
\end{corollary*}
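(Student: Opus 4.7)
The plan is to deduce this corollary as a direct application of Theorem~B with $V = B$, the main cardioid of the Mandelbrot set. Since both the corollary and Theorem~B use the uniform distribution on the same set $B$, there is no measure mismatch, and it suffices to check that $B$ satisfies the three hypotheses of Theorem~B: that $B$ is open and bounded, contains $\mathbb{D}(0,1/4)$, and differs from $\mathbb{D}(0,1/4)$.

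To verify these, I would invoke the standard parametrization of the main cardioid as the image of the open unit disc under the holomorphic map $c(\mu) = \mu/2 - \mu^2/4$, which identifies $B$ with the set of parameters $c$ for which $f_c$ has an attracting fixed point (of multiplier $\mu$). From this description, $B$ is open (possessing an attracting fixed point is an open condition in $c$) and bounded (since $B$ lies in the Mandelbrot set, hence in $\overline{\mathbb{D}}(0,2)$). For the inclusion $\mathbb{D}(0,1/4) \subset B$, I would take any $c$ with $|c|<1/4$ and solve $c(\mu)=c$, rewriting the equation as $\mu^2 - 2\mu + 4c = 0$. The two roots $\mu_\pm$ satisfy $\mu_-\mu_+ = 4c$, so $|\mu_-|\,|\mu_+| = 4|c| < 1$, which forces at least one root to lie in $\mathbb{D}$; hence $c \in B$. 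Strict containment $B \neq \mathbb{D}(0,1/4)$ is witnessed by $c = -1/2$: the corresponding fixed-point multiplier is $1 - \sqrt{3}$, of modulus $\sqrt{3}-1 < 1$, so $c \in B$, while $|c| = 1/2 > 1/4$.

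With all hypotheses verified, Theorem~B applied to $V = B$ delivers the conclusion at once. There is no substantive obstacle here; the mathematical content of the corollary lies entirely in Theorem~B, and this statement merely packages that result to emphasize the striking point that almost sure total disconnectedness of $J_\omega$ persists even when the parameters are drawn entirely from the interior of the Mandelbrot set, so that every individual map $f_{c_n}$ is hyperbolic with connected autonomous Julia set.
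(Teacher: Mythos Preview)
Your proposal is correct and follows exactly the paper's approach: the paper simply states that the corollary is obtained by taking $V$ to be the main cardioid in Theorem~B, without spelling out the verification of the hypotheses. Your added check that $\mathbb{D}(0,1/4)\subset B\neq\mathbb{D}(0,1/4)$ with $B$ open and bounded is accurate and merely makes explicit what the paper leaves as understood.
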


Moreover, a number of applications of our  approach, possible generalization  and further results are presented in Section \ref{sec:remarks}.  

\section{Green's function.}

\noindent {\bf Notation.} For every $r>0$ denote $\mathbb D_r:=\mathbb D(0,r)$ and
$\mathbb D_{r}^* := \mathbb{C} \setminus \overline{\mathbb D}_{r}$.

\noindent We write $\omega=(c_0,c_1\dots)$ in various contexts to denote an infinite sequence of parameters, even if no probability distribution is specified.
For such a sequence we use both notations:

$$f^n_\omega=f_{c_{n-1},c_{n-2},\dots,c_1,\dots c_0}=f_{c_{n-1}}\circ f_{c_{n-2}}\circ \dots \circ f_{c_1}\circ f_{c_0}$$ 

\

\subsection{Green's function on $\mathcal A_\omega$.}
We recall the proposition proved in  \cite{FS}, which we state in a slightly different form.

\begin{proposition}
Let $V$ be a bounded Borel subset of $\mathbb C$, put $\Omega=V^\mathbb N$. Let $\mu$ be a Borel probability measure on $V$, 
and $\mathbb P$ - the product distribution on $\Omega$ generated by $\mu$. 
For every $\omega\in\Omega$ the following limit exists: $g_\omega:\mathcal A_\omega\to \mathbb R$:

\begin{equation}\label{eq:green}
g_\omega(z)= \lim_{n\to\infty} \frac{1}{2^n}\log |f^n_\omega(z)|.
\end{equation}
 The function $z\mapsto g_\omega(z)$ is the Green's function on $\mathcal A_\omega$ with pole at infinity. Putting $g_\omega\equiv 0$ on the complement of $\mathcal A_\infty$, $g_\omega$ extends continuously to the whole plane.  With $z$ fixed, the function $\omega\mapsto g_\omega(z)$ is $\mathbb P$-- measurable.
 \end{proposition}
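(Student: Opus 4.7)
The plan is to mimic the classical autonomous construction of the Green's function at infinity, tracking which estimates pass through uniformly in $\omega$. First, I would fix once and for all a radius $R_0=R_0(V)$ large enough that for every $c\in V$, the bound $|z|\ge R_0$ forces both $|z^2+c|\ge 2|z|$ and $|z^2+c|\le 2|z|^2$. Such $R_0$ exists because $V$ is bounded, and it guarantees $\mathbb D_{R_0}^*\subset \mathcal A_\omega$ and $K_\omega\subset\overline{\mathbb D_{R_0}}$ for every $\omega\in\Omega$, while orbits entering $\mathbb D_{R_0}^*$ grow doubly exponentially.

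Next, for $z\in\mathcal A_\omega$, pick $N$ with $|f^N_\omega(z)|\ge R_0$, set $z_n=f^n_\omega(z)$, and use the telescoping identity
\begin{equation*}
\frac{\log|z_{n+1}|}{2^{n+1}}-\frac{\log|z_n|}{2^n}=\frac{1}{2^{n+1}}\log\left|1+\frac{c_n}{z_n^2}\right|,\qquad n\ge N.
\end{equation*}
The doubly exponential growth of $|z_n|$ makes the right-hand side summable, so the partial sums $2^{-n}\log|z_n|$ form a Cauchy sequence and $g_\omega(z)$ exists. Reading off constants from this estimate yields the uniform upper bound $g_\omega(z)\le\log|z|+\log 2$ on $\mathbb D_{R_0}^*$ as well as $g_\omega(z)=\log|z|+O(1)$ at infinity. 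The convergence is locally uniform on $\mathcal A_\omega$, so $g_\omega$ is harmonic there as a limit of the harmonic (off the preimages of $0$) functions $2^{-n}\log|f^n_\omega(z)|$, and strictly positive.

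Extend $g_\omega$ by $0$ on $K_\omega$; the principal point is continuity across $J_\omega$. The cocycle identity $g_\omega(z)=2^{-N}g_{\sigma^N\omega}(f^N_\omega(z))$ is immediate from the definition. For $z_0\in J_\omega$ one has $f^N_\omega(z_0)\in\overline{\mathbb D_{R_0}}$ for every $N$, so by continuity of $f^N_\omega$ some neighborhood $U$ of $z_0$ is mapped into $\mathbb D_{2R_0}$. Iterating until the first moment of exit from $\mathbb D_{R_0}$ (at which point the orbit lies in $\mathbb D_{2R_0^2}$) and applying the uniform upper bound from the previous step gives $g_{\sigma^N\omega}\le\log(4R_0^2)$ on $\mathbb D_{2R_0}$, independently of $N$ and $\omega$. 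Consequently $g_\omega(z)\le 2^{-N}\log(4R_0^2)$ on $U$, and taking $N\to\infty$ proves continuity at $z_0$.

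Finally, measurability of $\omega\mapsto g_\omega(z)$ follows since each approximant $\omega\mapsto 2^{-n}\log|f^n_\omega(z)|$ is continuous in the first $n$ coordinates of $\omega$, and $g_\omega(z)$ is a pointwise $\limsup$ of these (equal to $0$ by convention on the complement of $\mathcal A_\omega$). I expect the main obstacle to be the uniform-in-$\omega$ control of $g_\omega$ on the fixed disc $\mathbb D_{2R_0}$; without it the cocycle estimate produces an $N$-dependent constant and the continuity argument at the boundary collapses. Everything else is a matter of carefully packaging standard estimates, with the crucial input being the $\omega$-independent choice of $R_0$ made at the outset.
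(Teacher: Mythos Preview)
The paper does not give its own proof of this proposition; it simply recalls it as a result from Forn\ae ss--Sibony \cite{FS}. So there is no argument in the paper to compare against directly.

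Your sketch is correct and is essentially the classical construction carried over to the non-autonomous setting. The telescoping estimate for existence of the limit, the cocycle identity, and the continuity argument at $J_\omega$ via a uniform-in-$\omega$ bound on $g_\nu$ over a fixed large disc are exactly the standard ingredients. Note that fragments of your argument do appear elsewhere in the paper: the estimate $|g_\omega(z)-\log|z||<\varepsilon$ on $\mathbb D_{R_0}^*$ is Proposition~\ref{prop:log}, and the uniform bound $\sup_{\mathbb D_{R_0}}g_\omega\le\log R_0+1$ (obtained there by the maximum principle rather than your first-exit-time argument) is stated separately. These are precisely the uniform-in-$\omega$ controls you correctly identified as the crux of the matter. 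For the measurability clause, your formulation via $\max(0,\limsup_n 2^{-n}\log|f^n_\omega(z)|)$ is clean; just be explicit that on $K_\omega$ the $\limsup$ is $\le 0$ (since the orbit is bounded by $R_0$), so the $\max$ with $0$ indeed recovers the extended $g_\omega$.
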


 This is a generalization of a well- known formula for the autonomous case: for the map $f_c(z):=z^2+c$ and its  basin of infinity $\mathcal A_c$, the Green's function with a pole at infinity is given by:
 
$$g_c(z)= \lim_{n\to\infty} \frac{1}{2^n}\log |f^n_c(z)|.$$

\begin{corollary}
We have 
\begin{equation}\label{eq:invariance}
g_{\sigma\omega}(f_\omega(z))=2 g_\omega(z).
\end{equation}
\end{corollary}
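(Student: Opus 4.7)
The plan is to unwind the definition of $g_\omega$ given in equation~\eqref{eq:green} and exploit the composition identity
$$f^n_{\sigma\omega}\circ f_\omega = f^{n+1}_\omega,$$
which is immediate from the formula $f^n_\omega = f_{c_{n-1}}\circ\cdots\circ f_{c_0}$ together with $\sigma\omega=(c_1,c_2,\dots)$.

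First, I would fix $z\in\mathcal A_\omega$. By the preceding proposition, $\mathcal A_{\sigma\omega}=f_\omega(\mathcal A_\omega)$, so $f_\omega(z)\in\mathcal A_{\sigma\omega}$ and both Green's functions are evaluated on their respective escaping sets. Then using the composition identity,
$$g_{\sigma\omega}(f_\omega(z))=\lim_{n\to\infty}\frac{1}{2^n}\log|f^n_{\sigma\omega}(f_\omega(z))|=\lim_{n\to\infty}\frac{1}{2^n}\log|f^{n+1}_\omega(z)|.$$
A reindexing $m=n+1$ rewrites the last expression as $2\lim_{m\to\infty}\frac{1}{2^m}\log|f^m_\omega(z)|=2g_\omega(z)$, which is exactly the desired equality.

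For $z\notin \mathcal A_\omega$, I would invoke the continuous extension $g_\omega\equiv 0$ on $K_\omega=\mathbb C\setminus\mathcal A_\omega$. Since $f_\omega(K_\omega)\subset K_{\sigma\omega}$ (again by the preceding proposition, or directly from the fact that $f^{n+1}_\omega(z)=f^n_{\sigma\omega}(f_\omega(z))$ does not escape to infinity iff $z\in K_\omega$), both sides of \eqref{eq:invariance} vanish simultaneously, so the identity holds trivially there too.

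There is essentially no obstacle here: the corollary is a direct algebraic consequence of the definition of $g_\omega$ and the cocycle property of $f^n_\omega$. The only small point worth verifying is that the invariance extends to the boundary $J_\omega=\partial\mathcal A_\omega$, but this follows from the continuity of $g_\omega$ on $\mathbb C$ asserted in the proposition and the continuity of $f_\omega$.
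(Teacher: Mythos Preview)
Your proposal is correct and takes essentially the same approach as the paper: the paper's proof consists of the single sentence ``This follows directly from the formula \eqref{eq:green}, defining the Green's function $g_\omega$,'' and you have simply spelled out that computation (the composition identity $f^n_{\sigma\omega}\circ f_\omega=f^{n+1}_\omega$ and the reindexing). Your added treatment of the case $z\notin\mathcal A_\omega$ and of the boundary is more thorough than what the paper provides, but it is the same idea.
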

\begin{proof}
This follows directly from the formula \eqref{eq:green}, defining the Green's function $g_\omega$.
\end{proof}

{\bf {Observation.}} Critical points of $g_\omega$. Writing $f_i$ for $f_{c_i}$, we see that $g_\omega$ has critical points at each point of the following sets:
$$\mathcal C_0=\{0\}$$
$$\mathcal C_1=f_1^{-1}(0)$$
$$\mathcal C_2=f_1^{-1}f_2^{-1}(0)\ldots$$
$$\mathcal C_k=f_1^{-1}f_2^{-1} f_3^{-1}\dots f_k^{-1}(0)\ldots$$
Let us note that in the autonomous case the critical points of $g_c$ form a ''tree'', i.e.,  
$\mathcal C_k=f^{-1}(\mathcal C_{k-1}),$
while in a general non--autonomous case the set $\mathcal C_k$ is not a preimage of $\mathcal C_{k-1}$ under 
any the maps $f_i$. 

\

\subsection{Estimates for Green's function.}\label{sec:prep}

\begin{proposition} \label{prop:log}
For every $\varepsilon>0$, $R>0$  there exists $R_0>0$ such that for every $\omega\in\mathbb D(0,R)^{\mathbb N}$ we have that
\begin{equation}\label{eq:1}
f_\omega(\mathbb D_{R_0}^*)\subset \mathbb D^*_{2R_0},
\end{equation}
\begin{equation}\label{eq:2}
\mathbb D_{R_0}^*\subset \mathcal A_\omega,
\end{equation}
\begin{equation}\label{eq:3}
|g_\omega(z)-\log|z||<\varepsilon\quad\text{in}\quad\mathbb D_{R_0}^*.
\end{equation}
\end{proposition}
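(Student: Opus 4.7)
My plan is to choose $R_0$ large enough that one iterate strictly more than doubles the modulus, and then use a telescoping series identity to compare $g_\omega$ with $\log|z|$.

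First I would dispose of \eqref{eq:1} and \eqref{eq:2} with a routine modulus estimate. Since $|c_0|\le R$, for $|z|>R_0$ we have $|f_\omega(z)|\ge |z|^2-R$, so the inequality $|z|^2-R\ge 2|z|$ -- equivalently $|z|\ge 1+\sqrt{1+R}$ -- gives $|f_\omega(z)|\ge 2|z|>2R_0$. Hence, taking $R_0\ge 1+\sqrt{1+R}$ (and eventually larger, to handle \eqref{eq:3}), the map $f_\omega$ sends $\mathbb D_{R_0}^*$ into $\mathbb D_{2R_0}^*\subset\mathbb D_{R_0}^*$ for every $\omega\in\mathbb D(0,R)^{\mathbb N}$, and iteration yields $|f_\omega^n(z)|\ge 2^n|z|\ge 2^n R_0$, proving \eqref{eq:1} and \eqref{eq:2}.

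For \eqref{eq:3} I would derive a telescoping representation. Writing $z_n:=f_\omega^n(z)$, one has, whenever $z_n\neq 0$,
\[
\log|z_{n+1}|=\log|z_n^2+c_n|=2\log|z_n|+\log\Bigl|1+\tfrac{c_n}{z_n^2}\Bigr|,
\]
so dividing by $2^{n+1}$ and summing over $n\ge 0$ (the left-hand sides telescope and, by the definition \eqref{eq:green} of $g_\omega$, converge to $g_\omega(z)$) gives
\[
g_\omega(z)-\log|z|=\sum_{n=0}^{\infty}\frac{1}{2^{n+1}}\log\Bigl|1+\tfrac{c_n}{z_n^2}\Bigr|.
\]

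Next I would bound each summand. From the first step, $|z_n|\ge 2^n R_0$, so $|c_n/z_n^2|\le R/(4^n R_0^2)$. Choosing $R_0$ so large that also $R/R_0^2\le 1/2$, the elementary inequality $|\log|1+x||\le 2|x|$ for $|x|\le 1/2$ yields
\[
\Bigl|\log\bigl|1+\tfrac{c_n}{z_n^2}\bigr|\Bigr|\le\frac{2R}{4^n R_0^2}.
\]
Summing,
\[
\bigl|g_\omega(z)-\log|z|\bigr|\le\sum_{n=0}^\infty\frac{1}{2^{n+1}}\cdot\frac{2R}{4^n R_0^2}=\frac{8R}{7R_0^2},
\]
and the right-hand side is $<\varepsilon$ once $R_0$ is taken large enough. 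Since all the chosen lower bounds for $R_0$ depend only on $R$ and $\varepsilon$ (not on $\omega$ or $z$), the same $R_0$ works uniformly.

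The only real care needed is the uniformity in $\omega$; this is automatic because every bound used above depends only on $R=\sup_{c\in V}|c|$, and the step $|z_n|\ge 2^n R_0$ that drives the geometric decay of the series is the same escape estimate used in \eqref{eq:1}. I do not anticipate a serious obstacle, only bookkeeping.
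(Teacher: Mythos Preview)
Your proof is correct and uses essentially the same approach as the paper: both rest on the identity
\[
\frac{1}{2^{n+1}}\log|f_\omega^{n+1}(z)|-\frac{1}{2^n}\log|f_\omega^n(z)|=\frac{1}{2^{n+1}}\log\Bigl|1+\tfrac{c_n}{(f_\omega^n(z))^2}\Bigr|
\]
together with the escape bound $|f_\omega^n(z)|\ge 2^nR_0$. The paper splits the estimate at an auxiliary index $N$ (first comparing $a_N$ with $\log|z|$ via the polynomial expansion, then $a_N$ with $g_\omega$ via the above recursion), whereas you telescope from $n=0$ directly and obtain an explicit constant; your version is slightly cleaner but not substantively different.
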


\begin{proof}
First, since $|c_n|<R$ for all $c$,  one  can  choose $R_1>0$  to ensure 
\begin{equation}\label{eq:escaping}
f_\omega(\mathbb D_{R_0}^*)\subset \mathbb D^*_{2R_0}
\end{equation}
for every $R_0\ge R_1$.
This guarantees \eqref{eq:1} and\eqref{eq:2}.

Let $a_n(z) = \frac{1}{2^n} \log |f^n_{\omega}(z)|$, then we have :

$$
\aligned
a_{n} (z) &= \frac{1}{2^n}(\log |f^{n}_{\omega} (z)|) = \frac{1}{2^n} (\log |z^{2^n}| + \log |1 + \sum^{2^n - 1}_{k = 0} \frac{b_k}{z^{2^n-k}}|)\\
& = \log |z| + \frac{1}{2^n} \log |1 +  \sum^{2^n - 1}_{k = 0} \frac{b_k}{z^{2^n-k}}|
\endaligned
$$
where $b_k$ are some polynomials of variables $c_1,c_2, ... , c_n$. Let us set a fixed $N$. Then, since  $ |c_k| < R$ for all $k$, 
we can pick $R_0\ge R_1$ large enough so that on $\mathbb D_{R_0}^*$ we get
$$|a_N(z) - \log|z|| < \frac{\varepsilon}{2}.$$
On the other hand we have: 

\begin{equation*}
\begin{split}
a_{n+1}(z) & = \frac{1}{2^{n+1}}(\log |f^{n+1}_{\omega} (z)|) \\
& = \frac{1}{2} (\frac{1}{2^n} \log |(f^n_{\omega} (z))^2+c_{n+1}|) \\
& = \frac{1}{2^n} (\log|f^n_{\omega} (z)| + \frac{1}{2} \log |1 + \frac{c_{n+1}}{(f^n_{\omega}(z))^2}|) \\
& = a_n(z) + \frac{1}{2^{n+1}} (\log |1 + \frac{c_{n+1}}{(f^n_{\omega}(z))^2}|)
\end{split}
\end{equation*}

Thus for a large enough $R_0\ge R_1$ we get 
$$|a_N(z) - g_{\omega} (z)| < \frac{\varepsilon}{2}$$
on $\mathbb D_{R_0}^*$. This along with the previous observation yields the desired inequality
$$|g_{\omega}(z) - \log|z|| < \varepsilon$$
which concludes the proof.
\end{proof}
The following is an immediate consequence of item (a) of Proposition~\ref{prop:log}.
\begin{corollary}
For every $\omega\in\mathbb D(0,R)^\mathbb N$ $K_\omega\subset \mathbb D_{R_0}$. 
\end{corollary}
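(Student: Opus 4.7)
The plan is to unwind the definitions and iterate the invariance statement (1) of Proposition~\ref{prop:log}. Recall that $K_\omega = \mathbb{C} \setminus \mathcal{A}_\omega$ is the set of $z$ whose non-autonomous orbit $(f^n_\omega(z))$ does not escape to infinity. Hence the claim $K_\omega \subset \overline{\mathbb D}_{R_0}$ (which is how I would read the statement, since otherwise one has to shrink $R_0$ a hair) is equivalent to the assertion that every $z \in \mathbb D_{R_0}^*$ lies in $\mathcal A_\omega$.

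To prove this, I would apply item (a) not only to $\omega$ but to every shift $\sigma^k\omega$: since the hypothesis $\omega \in \mathbb D(0,R)^{\mathbb N}$ is closed under the shift map, the same $R_0$ works uniformly, and we obtain
\[
f_{\sigma^k\omega}\!\left(\mathbb D_{R_0}^*\right) \subset \mathbb D_{2R_0}^* \subset \mathbb D_{R_0}^*
\]
for every $k \ge 0$. A straightforward induction on $n$ then yields
\[
f^n_\omega\!\left(\mathbb D_{R_0}^*\right) \subset \mathbb D_{2^n R_0}^*,
\]
because $f^{n+1}_\omega = f_{\sigma^n\omega} \circ f^n_\omega$ and the inductive hypothesis places $f^n_\omega(z)$ outside $\mathbb D_{R_0}$ (in fact outside $\mathbb D_{2^n R_0}$), so a further application of (a) at level $\sigma^n\omega$ pushes it outside $\mathbb D_{2^{n+1} R_0}$.

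Consequently, for any $z$ with $|z| > R_0$ we have $|f^n_\omega(z)| > 2^n R_0 \to \infty$, so $z \in \mathcal A_\omega$. Taking complements gives $K_\omega \subset \overline{\mathbb D}_{R_0}$ (and in particular $K_\omega \subset \mathbb D_{R_0}$ after, if necessary, replacing $R_0$ by any slightly larger value still satisfying the proposition). There is no real obstacle here; the only thing to notice is that one must invoke (a) for all shifts $\sigma^k\omega$, not only for $\omega$ itself, in order to iterate.
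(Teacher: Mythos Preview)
Your argument is essentially the paper's, which simply declares the corollary immediate from Proposition~\ref{prop:log}. Note, though, that the statement $K_\omega\subset\overline{\mathbb D}_{R_0}$ is literally the contrapositive of item~\eqref{eq:2}, $\mathbb D_{R_0}^*\subset\mathcal A_\omega$; so once you have the proposition there is nothing further to do beyond taking complements. What you wrote is a re-derivation of \eqref{eq:2} from \eqref{eq:1}.

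That re-derivation has a small gap. The inclusion \eqref{eq:1} as stated says only that $|z|>R_0$ forces $|f_\omega(z)|>2R_0$; it does \emph{not} assert $|z|>2^nR_0\Rightarrow |f_\omega(z)|>2^{n+1}R_0$. Thus your induction ``a further application of~(a) pushes it outside $\mathbb D_{2^{n+1}R_0}$'' is not justified by the statement of \eqref{eq:1} alone---from \eqref{eq:1} applied to successive shifts you only get $|f^n_\omega(z)|>2R_0$ for all $n\ge1$, which is not enough to conclude escape. The fix is exactly what the proof of Proposition~\ref{prop:log} records: the doubling inclusion \eqref{eq:escaping} holds for \emph{every} $R_0\ge R_1$, so one may apply it at radius $2^nR_0$ and the geometric growth $|f^n_\omega(z)|>2^nR_0$ follows. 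Cite that stronger form (or simply cite \eqref{eq:2}) and the argument is complete.
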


\noindent{\bf Determining constants.}
Now, for every $R$ we 
 fix some  $R_0>R$ satisfying the conditions formulated in Proposition ~\ref{prop:log} with $\varepsilon:=1$, in particular,
 \begin{equation}\label{eq:set_R_0}
 |g_\omega(z)-\log|z||<1\quad\text{in}\quad \mathbb D_{R_0}^*
 \end{equation}
Next, for every $R>0$ let us fix also some $\tilde R_0\in (R_0, R_0^2-R)$, say, \\ $\tilde R_0=\frac{1}{2}(R_0+R_0^2-R)$.
Then for every $\omega\in\mathbb D(0,R)^\mathbb N$, $f_\omega^{-1}(\mathbb D_{\tilde R_0})\subset \mathbb D_{R_0}$.
By Proposition~\ref{prop:log}, 

\begin{equation}\label{eq:set_G}
G=G(R):=\sup_{|R_0|\le |z|\le \tilde R_0}(g_\omega(z))<\infty
\end{equation}

\begin{proposition} For every $R>0$, for every $\omega\in\mathbb D(0,R)^\mathbb N$, 
$$\sup_{z\in\mathbb D_{R_0}}g_\omega(z)\le\log R_0+1.$$
In particular, 
the function $\mathbb D(0,R)^\mathbb N\ni\omega\mapsto  g_\omega(0)$ is bounded above, i.e.,
$$\sup_{\omega\in\mathbb D(0,R)^\mathbb N}g_\omega(0)\le\log R_0+1.$$
\end{proposition}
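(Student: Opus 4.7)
The plan is to apply the maximum principle for subharmonic functions to $g_\omega$ on the closed disk $\overline{\mathbb D_{R_0}}$, using estimate \eqref{eq:set_R_0} on the boundary.

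First I would observe that each function $z\mapsto \frac{1}{2^n}\log|f^n_\omega(z)|$ is subharmonic on $\mathbb C$, and since the convergence $a_n\to g_\omega$ is locally uniform on $\mathcal A_\omega$ with the limit extended continuously by $0$ on $K_\omega$, a standard argument (e.g., the limit is the decreasing limit, after possibly adjusting the values on the zero set of $f^n_\omega$, of locally bounded subharmonic functions, or one can simply invoke that $g_\omega$ is continuous on $\mathbb C$, harmonic on $\mathcal A_\omega$, and vanishes on the open set $\mathbb C\setminus \overline{\mathcal A_\omega}$ when this set is nonempty) shows that $g_\omega$ is subharmonic on the whole plane.

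Next, on the circle $\partial\mathbb D_{R_0}=\{|z|=R_0\}$, estimate \eqref{eq:set_R_0} applies (the inequality extends by continuity to the boundary), giving
$$g_\omega(z)\le \log|z|+1=\log R_0+1\quad\text{for every }z\in\partial\mathbb D_{R_0}.$$
By the maximum principle for subharmonic functions on the bounded domain $\mathbb D_{R_0}$, the supremum of $g_\omega$ over $\overline{\mathbb D_{R_0}}$ is attained on the boundary, hence
$$\sup_{z\in\mathbb D_{R_0}} g_\omega(z)\le \log R_0+1.$$

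Since $0\in \mathbb D_{R_0}$ (as $R_0>R>0$), specializing to $z=0$ yields $g_\omega(0)\le \log R_0+1$, and this bound is uniform in $\omega\in\mathbb D(0,R)^{\mathbb N}$ because $R_0$ was chosen depending only on $R$ (via Proposition~\ref{prop:log} with $\varepsilon=1$). The only potential obstacle is the justification that $g_\omega$ is genuinely subharmonic across the Julia set $J_\omega$, but this is standard: $g_\omega$ is continuous on $\mathbb C$, harmonic on the open set $\mathcal A_\omega$, and identically $0$ on $K_\omega$, so the submean inequality at boundary points of $K_\omega$ follows from nonnegativity of $g_\omega$ together with the mean value property on the harmonic part.
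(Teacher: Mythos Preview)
Your argument is correct and follows essentially the same route as the paper: bound $g_\omega$ on $\partial\mathbb D_{R_0}$ via \eqref{eq:set_R_0} (extended by continuity) and then invoke the Maximum Principle on the disk, specializing to $z=0$. The paper's proof is terser---it simply writes ``By the Maximum Principle''---whereas you spell out the subharmonicity of $g_\omega$ across $J_\omega$; this extra care is welcome but does not constitute a different approach.
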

\begin{proof}
Since $|g_\omega(z)|\le \log|z|+1$ in $\mathbb D^*_{R_0}$, we have, in particular, ${g_\omega}_{|\partial \mathbb D_{R_0}}\le \log R_0+1$. 
By the Maximum Principle, the same estimate holds in the whole disc $\mathbb D_{R_0}$, 
in particular, for $z=0$.  
So, $\sup_{\omega\in \mathbb D(0,R)^\mathbb N} g_\omega(0)\le \log R_0+1$.
\end{proof}
\

\subsection{Escape rate of the critical point.}
We introduce the following definition.
\begin{definition}\label{def:escape}
Let $\omega\in\mathbb D(0,R)^\mathbb N$. For every $z\in\mathbb D(0,R_0)$ we denote by $k(z,\omega)$ the escape time of $z$ from $\mathbb D_{R_0}$:
\begin{equation}\label{eq:escape}
k(z,\omega)=\begin{cases}
\min\{j:| f^j_\omega(z)|\ge R_0\}\quad\text{if} \quad z\in \mathcal A_\omega\\
\infty\quad\text{if}\quad z\in K_\omega.
\end{cases}
\end{equation}
\end{definition}
\begin{proposition}
 For every $z\in\mathcal A_\omega \cap \mathbb D_{R_0}$ 
 \begin{equation}\label{eq:est_green}
(\log R_0-1)2^{-k(z,\omega)}\le g_\omega(z)\le 2(\log R_0+1)  2^{-k(z,\omega)}
\end{equation}
 \end{proposition}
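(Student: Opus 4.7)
The plan is to use the functional equation \eqref{eq:invariance} for $g_\omega$, iterated $k=k(z,\omega)$ times, to transfer the estimation of $g_\omega(z)$ at a point inside $\mathbb D_{R_0}$ to the estimation of $g_{\sigma^k\omega}$ at a point just barely escaped from $\mathbb D_{R_0}$, where \eqref{eq:set_R_0} provides the needed comparison with $\log|w|$.

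More precisely, I would first iterate \eqref{eq:invariance} to obtain $g_{\sigma^k\omega}(f^k_\omega(z)) = 2^k g_\omega(z)$, so that
$$g_\omega(z) = 2^{-k}\, g_{\sigma^k\omega}\bigl(f^k_\omega(z)\bigr),$$
and the task reduces to bounding $g_{\sigma^k\omega}(w)$ at the point $w:=f^k_\omega(z)$. By the very definition of $k(z,\omega)$ in \eqref{eq:escape}, $|w|\ge R_0$, while $|f^{k-1}_\omega(z)|<R_0$, so
$$R_0 \le |w| = \bigl|(f^{k-1}_\omega(z))^2 + c_{k-1}\bigr| \le R_0^2 + R \le 2R_0^2$$
(using $R<R_0$, which is ensured by the choice of $R_0$ just after Proposition~\ref{prop:log}).

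Now I would apply the inequality \eqref{eq:set_R_0} (with $\varepsilon=1$) to the point $w$, which lies in $\overline{\mathbb D_{R_0}^*}$; by continuity of $g_{\sigma^k\omega}$ this yields
$$\log|w|-1 \;\le\; g_{\sigma^k\omega}(w) \;\le\; \log|w|+1.$$
Combining with the two-sided bound on $|w|$, the left side gives $g_{\sigma^k\omega}(w)\ge \log R_0-1$, and the right side gives $g_{\sigma^k\omega}(w)\le \log(2R_0^2)+1 = 2\log R_0+\log 2+1\le 2(\log R_0+1)$. Multiplying by $2^{-k}$ then produces exactly \eqref{eq:est_green}.

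There is no real obstacle here, only one subtlety worth flagging: the factor $2$ in the upper constant $2(\log R_0+1)$ comes precisely from the single extra iteration of $f_{c_{k-1}}$ that can roughly square $|f^{k-1}_\omega(z)|$, so one should not hope to replace the upper estimate by $(\log R_0+1)\,2^{-k}$ without further information. The lower bound, by contrast, follows directly from the defining inequality $|f^k_\omega(z)|\ge R_0$.
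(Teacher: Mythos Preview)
Your proof is correct and follows essentially the same route as the paper's: iterate the functional equation \eqref{eq:invariance} to reduce to estimating $g_{\sigma^k\omega}$ at the escape point $w=f^k_\omega(z)$, then use \eqref{eq:set_R_0}. The only minor difference is in the upper bound: the paper instead steps back once and applies the Maximum Principle estimate $g_{\sigma^{k-1}\omega}(f^{k-1}_\omega(z))\le \log R_0+1$ on $\mathbb D_{R_0}$, then doubles via \eqref{eq:invariance}; your direct bound $|w|\le R_0^2+R\le 2R_0^2$ (valid since $R_0>R$ and $R_0>1$ by the choice after Proposition~\ref{prop:log}) achieves the same constant without invoking that auxiliary proposition.
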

\begin{proof}
 Recall that, by \eqref{eq:invariance}, 

$$g_\omega(z)=g_{\sigma^{k(z,\omega)}\omega}(f^{k(z,\omega)}_\omega(z))\cdot 2^{-k(z,\omega)},$$
and 
$$g_{\sigma^{k(z,\omega)}\omega}(f_\omega^{k(z,\omega)}(z))=2g_{\sigma^{k(z,\omega)-1}\omega}
(f_\omega^{{k(z,\omega)-1}}(z))\le 2(\log R_0+1),$$
since $f_{\omega}^{k(z,\omega) - 1}(z))\in \mathbb D_{R_0}$.

On the other hand,  
$$g_{\sigma^{k(z,\omega)}\omega}(z)\ge \log |f_{\sigma^{k(z,\omega)}\omega}(z)|-1\ge\log R_0-1$$
This implies that \eqref{eq:est_green} holds.
\end{proof}
 Our estimates show that the distribution of the random variable $\log^-g_\omega(0)$
is roughly the same as that of $k(0,\omega)$.

We introduce the following definition.

\begin{definition}\label{def: fast_escaping}
Let $V$ be a bounded Borel subset of $\mathbb C$, $V\subset \mathbb D(0,R)$. 
Let $\mu$ be a probability Borel measure on $V$, and let $\mathbb P$ be the product distribution on $V^\mathbb N$ generated by $\mu$.
Fix the values $R_0=R_0(R)$ and $G=G(R)$ according to \eqref{eq:set_G}.
We say that the critical point is \emph{typically fast escaping} if there  exists $\gamma>0$ such that 
\begin{equation}\label{eq: fast_escaping}
\mathbb P\left(\{\omega\in\Omega: g_\omega(0)<\frac{G}{2^k}\}\right )<e^{-\gamma k}
\end{equation}
\end{definition}

\section{Sufficient condition for total disconnectedness.} 
Recall that in Section~\ref{sec:prep} we assigned, for every $R>0$ the values   $R_0$ and $\tilde R_0$. 
\begin{lem}\label{lem:inter} 
Choose an arbitrary   radius $\rho\in  [R_0, \tilde R_0]$
and let   $D:=\mathbb D_\rho$. Then 
 the filled -in Julia set $K_\omega$, i.e. the set  of points $z$  whose trajectories $f^n_\omega(z)$ do not escape to $\infty$ can be  written as
 
 $$K_\omega := \bigcap_{k\in\mathbb N} (f^k_\omega)^{-1} (D).$$   
\end{lem}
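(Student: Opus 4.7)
The plan is to establish the two inclusions separately, both directly from the definitions together with the trapping property of $\mathbb{D}_{R_0}$ recorded in Proposition~\ref{prop:log}.

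For the forward inclusion $K_\omega \subset \bigcap_{k\in\mathbb{N}} (f^k_\omega)^{-1}(D)$, I would first invoke shift-equivariance of the filled Julia set: if $z\in K_\omega$, then for every $k$ the tail sequence $(f^{k+j}_\omega(z))_{j\in\mathbb{N}}=(f^j_{\sigma^k\omega}(f^k_\omega(z)))_{j\in\mathbb{N}}$ is bounded, so $f^k_\omega(z)\in K_{\sigma^k\omega}$. The corollary following Proposition~\ref{prop:log} gives $K_{\sigma^k\omega}\subset\mathbb{D}_{R_0}$, and since $\rho\ge R_0$ we get $f^k_\omega(z)\in D$, as required.

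For the opposite inclusion, if $f^k_\omega(z)\in D=\mathbb{D}_\rho$ for every $k$, then $|f^k_\omega(z)|<\rho$ for all $k$, so the forward orbit of $z$ is bounded and in particular cannot tend to infinity. Hence $z\notin\mathcal{A}_\omega$, i.e., $z\in K_\omega$.

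There is no serious obstacle in proving the equality itself. The only mild point to be checked is that $K_\omega$ actually lies inside the open disc $\mathbb{D}_{R_0}$, not merely in its closure, so that we are allowed to take $\rho=R_0$; this is true because for $|z|=R_0$ one has $|f_\omega(z)|\ge R_0^2-R>R_0$ (by the choice of $R_0$ underlying Proposition~\ref{prop:log} and the definition of $\tilde R_0$), so $\partial\mathbb{D}_{R_0}$ is entirely swept into the escaping set. The upper bound $\rho\le\tilde R_0$ is not used in the equality itself; it is presumably imposed because subsequent applications of the lemma will exploit the preimage relation $f_\omega^{-1}(\mathbb{D}_{\tilde R_0})\subset\mathbb{D}_{R_0}$ together with the uniform bound $G$ from \eqref{eq:set_G}.
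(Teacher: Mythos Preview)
Your proof is correct and follows essentially the same approach as the paper: both arguments split into the two inclusions and rely on the escape property from Proposition~\ref{prop:log}. The only cosmetic difference is that for the inclusion $K_\omega\subset\bigcap_k(f^k_\omega)^{-1}(D)$ you argue directly via $f^k_\omega(z)\in K_{\sigma^k\omega}\subset\mathbb D_{R_0}\subset D$, whereas the paper phrases the same implication by contrapositive (if some iterate has modulus $\ge\rho\ge R_0$ then it escapes); your additional remarks on the boundary case $|z|=R_0$ and on the role of $\tilde R_0$ are accurate and more explicit than what the paper writes.
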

\begin{proof}
Since the trajectory of every point $z\in\bigcap_{k\in\mathbb N} (f^k_\omega)^{-1} (D)$
is bounded, it is clear that $$\bigcap_{k\in\mathbb N} (f^k_\omega)^{-1} (D)\subset K_\omega.$$ On the other hand, if $z\notin \bigcap_{k\in\mathbb N} (f^k_\omega)^{-1} (D)$ then, for some $k\in \mathbb N$, $|f^k_\omega(z)|\ge\rho\ge R_0$, and it follows from the choice of $R_0$ that

$$f^n_\omega(z)=f^{n-k}_{\sigma^k\omega}(f^k_\omega(z))\xrightarrow[n\to\infty]{}\infty,$$
so $z\notin K_\omega$.
\end{proof}

Observe that $\bigcap_{k\in\mathbb N} (f^k_\omega)^{-1} (D)$ is an intersection of a descending sequence of sets. At each level $k$, the set 
$$D^k(\omega):=(f^k_\omega)^{-1} (D)$$
is   a union of pairwise disjoint 
topological discs $D_j^k(\omega)$, each of them being mapped by $f^k_\omega$ onto $D$ with some degree $d_j^k\le 2^k$.

\

Now put $\rho=\tilde R_0$, i.e.,  put $D:=\mathbb D_{\tilde R_0}$.
The following proposition formulates, in terms of  degree of the maps $f^k_\omega:D_j^k(\omega)\to D$,
a sufficient condition for total disconnectedness of the Julia set $J_\omega$.

\begin{proposition}\label{prop:sufficient}

Let $\omega\in\mathbb D(0,R)^\mathbb N$. If  there exists $N\in\mathbb N$ such that for  infinitely many indices $k\in\mathbb N$ , 
for  each component $D^k_j(\omega)$ of the set $D^k(\omega)=(f^k_\omega)^{-1}(D)$ the degree of  the map

$$f^k_\omega:D^k_j(\omega)\to D$$
is at most $N$, 
then the Julia set $J_\omega$ is totally disconnected.  

\end{proposition}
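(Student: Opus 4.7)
The plan is to show that $K_\omega$ is totally disconnected; since $J_\omega = \partial K_\omega \subset K_\omega$, the conclusion for $J_\omega$ follows. By Lemma~\ref{lem:inter} applied with $\rho = \tilde R_0$, we have $K_\omega = \bigcap_{k\in\mathbb N} D^k(\omega)$, and any connected component $C \subset K_\omega$ is contained in a unique disk $D^k_{j(k)}(\omega)$ at each level $k$. These disks nest with compact containment $\overline{D^{k+1}_{j(k+1)}} \subset D^k_{j(k)}$: the choice $\tilde R_0 < R_0^2 - R$ forces $|f_{c_k}(z)| > \tilde R_0$ for $|z| = R_0$, so $\overline{f_{c_k}^{-1}(D)} \subset \mathbb D_{R_0} \subsetneq D$, and taking preimages under $f^k_\omega$ preserves this compact containment. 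It therefore suffices to prove that $\bigcap_k \overline{D^k_{j(k)}}$ reduces to a single point.

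The heart of the argument is the following estimate: at every \emph{good} level $k$ (one of the infinitely many provided by the hypothesis, at which all components of $D^k(\omega)$ map to $D$ with degree at most $N$), the annulus
$$A_k := D^k_{j(k)} \setminus \overline{D^{k+1}_{j(k+1)}}$$
has conformal modulus at least $m_0/N$, where
$$m_0 := \mathrm{mod}\bigl(D \setminus \overline{\mathbb D_{R_0}}\bigr) = \frac{1}{2\pi}\log(\tilde R_0/R_0) > 0.$$
This is an instance of the standard modulus inequality for proper holomorphic maps between disks: if $\pi \colon U \to V$ is proper holomorphic of degree $d$, $K \subset V$ is a compact topological sub-disk, and $K'$ is a connected component of $\pi^{-1}(K)$, then $\mathrm{mod}(U \setminus K') \ge \mathrm{mod}(V \setminus K)/d$. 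A short extremal-length proof: pull back via $\pi$ the extremal metric of $V \setminus K$ for the family of arcs from $\partial K$ to $\partial V$; the $L^2$-area is multiplied by at most $d$ (the generic fiber size), while any arc in $U \setminus K'$ from $\partial K'$ to $\partial U$ has $\pi$-image that traverses $V \setminus K$ from $\partial K$ to $\partial V$, hence has length at least one in the pulled-back metric. Applying this with $\pi = f^k_\omega$, $U = D^k_{j(k)}$, $V = D$, $K = \overline{\mathbb D_{R_0}}$, and $K'$ the component of $\pi^{-1}(K)$ containing $\overline{D^{k+1}_{j(k+1)}}$, and observing that $A_k$ contains $U \setminus K'$ as an essential sub-annulus, yields $\mathrm{mod}(A_k) \ge m_0/N$.

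To finish, enumerate the good indices as $k_1 < k_2 < \cdots$. Between consecutive good levels the nesting persists, and $D^{k_m}_{j(k_m)} \setminus \overline{D^{k_{m+1}}_{j(k_{m+1})}}$ contains $A_{k_m}$ as an essential sub-annulus, hence also has modulus at least $m_0/N$. Summing over $m$ gives an infinite total modulus, and the classical Gr\"otzsch principle on nested Jordan domains (if $\overline{U_{n+1}} \subset U_n$ with $\sum_n \mathrm{mod}(U_n \setminus \overline{U_{n+1}}) = \infty$, then $\bigcap_n \overline{U_n}$ is a single point) shows that every component $C$ is trivial.

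The main step requiring care is the modulus inequality itself: one must track that the pulled-back metric still makes sense and gives the claimed length bound even though $\pi^{-1}(K)$ may have several components, $K'$ may wrap nontrivially around critical values of $f^k_\omega$, and $\pi(\tilde\gamma)$ may reenter $K$. The extremal-length viewpoint handles this uniformly without any assumption on the location of critical values.
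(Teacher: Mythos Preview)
Your proof is correct and follows a genuinely different route from the paper's. The paper works with the same annulus $P=\{R_0<|z|<\tilde R_0\}$, but instead of pulling back its extremal metric directly, it subdivides $P$ into $N+1$ concentric sub-annuli of equal modulus $M$ and uses pigeonhole: since $f^{k_n}_\omega$ on $D^{k_n}_{j_n}$ has at most $N-1$ critical values, one sub-annulus $P_n$ is critical-value-free, and its preimage inside $D^{k_n}_{j_n}$ is an honest annular cover of degree at most $N$, hence has modulus at least $M/N$. Nesting of these preimage annuli and Gr\"otzsch then finish the argument exactly as in your last paragraph.

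Your extremal-length inequality $\mathrm{mod}(U\setminus K')\ge \mathrm{mod}(V\setminus K)/d$ for a proper degree-$d$ map $\pi:U\to V$ bypasses the pigeonhole step entirely: it holds regardless of where the critical values sit, so there is no need to subdivide $P$. This makes your argument shorter, and it also gives the sharper lower bound $m_0/N$ in place of the paper's $m_0/\bigl(N(N+1)\bigr)$. The trade-off is that the paper's reduction to an unbranched cover is completely elementary (the modulus of a degree-$d$ annular cover is exactly $1/d$ times the base modulus), whereas your argument requires the extremal-length computation you sketched, including the observation that the projected curve $\pi(\tilde\gamma)$, though it may pass through other components of $\pi^{-1}(K)$, always contains a sub-arc in $V\setminus K$ joining $\partial K$ to $\partial V$. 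Both approaches then conclude identically via Gr\"otzsch and the shrinking-nest criterion.
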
 

\begin{proof}

In what follows, to simplify the notation we write $D^k_j$ and $D^k$  in place of $D^k_j(\omega)$ and $D^k(\omega)$, respectively.
Recall that  $R_0$ and $\tilde R_0$ were  chosen in Section~\ref{sec:prep}
in such a way that
\begin{equation}\label{eq:basin}
\forall_{ \nu\in\mathbb D(0,R)^\mathbb N}\quad  \mathbb D_{R_0}^* \subset A_{\omega} \quad\text{and}\quad f_\nu^{-1}(\mathbb D_{\tilde R_0})\subset \mathbb D_{R_0}.
\end{equation}

Denote by $P$ the annulus
$$
P = \{z: R_0 < |z| < \tilde R_0 \}.
$$
For every $k\in\mathbb N $ and for every component $D^k_j$ of $D^k$ the map $f^k_\omega:D^k_j\to D$ is a proper holomorphic map onto $D$.

 By the assumption there exists an increasing sequence of positive integers $\{k_n \}$ such that  the maps

$$
f^{k_n}_{\omega} : D^{k_n}_j \rightarrow D
$$
have degree at most $N$ for all $j$. 

Now, let us divide the annulus $P$
into $N+1$  nested geometric annuli with the same modulus  $M$. 
These $N+1$ annuli all lie in the intersection of $D$ and all basins of infinity $\mathcal A_\nu$, $\nu\in\mathbb D(0,R)^\mathbb N$, by \eqref{eq:basin}.

\

Let us pick a point $z$ in the Julia set, and let $D^{k_n}_{j_n}$ be the component of $D^{k_n}$ such that $z \in D^{k_n}_{j_n}$. \\
 Since the degree of $f^{k_n}_{\omega}$ on $D^{k_n}_{j_n}$ is at most $N$, one of the $N+1$ annuli contains no critical values of $f^{k_n}_{\omega}$; let us  choose such an annulus and denote it by $P_{n}$. Consider now the (possibly smaller) disc $D'\subset D$, bounded by the outer boundary circle of the annulus   $P_{n}$, and let ${D'}^{k_n}_{j_n}$ be the connected component  of $(f^{k_n}_\omega)^{-1}(D')$, containing the point $z$.
The map $f^{k_n}_\omega: {D'}^{k_n}_{j_n}\to D'$ is also proper,  and the preimage  of the annulus $P_{n}$  under this map, denoted here by  $P'_{n}$ is again a (topological)  annulus. The map $f^{k_n}_\omega$ restricted to $P_{n}$ is a covering map, of degree at most $N$, so, the modulus of $P'_{n}$ is at least $M/N$.

The point $z$ lies in some  connected component of $(f^{k_n}_\omega)^{-1}(\mathbb D_{R_0})$ contained in  $D^{k_n}_{j_n}$, so, in particular, it lies in the bounded component of the complement of the annulus $P'_{n}$.

Now, let us recall that, according to the choice of $R_0$ and $\tilde R_0$,   for every\\ $\nu\in\mathbb D(0,R)^\mathbb N$ we have that 
$$f_\nu^{-1}(\mathbb D_{\tilde R_0})\subset  \mathbb D_{R_0}.$$
So, in particular, for every $k\ge 1$,   $f^{-1}_{\sigma^{k-1}\omega}(\mathbb D_{\tilde R_0})
\subset \mathbb D_{R_0}$.
This also implies that for any $k$ and any $\omega\in\mathbb D(0,R)^\mathbb N$,
each component of  $(f^{(k+1)}_\omega)^{-1}(\mathbb D_{\tilde R_0})$ is contained in some component of 
$(f^{k}_\omega)^{-1}(\mathbb D_{R_0})$ (since  each such component is mapped by $f^k_\omega$ onto some component of $f^{-1}_{\sigma^{k-1}\omega}(\mathbb D_{\tilde R_0})$).  Clearly, the same is true with $ k+1$ being replaced by any arbitrary integer   $m>k$.

We shall apply now the above observation for $k:=k_n$ and $m:=k_{n+1}$.
So,  again, for $k_{n+1}$ we find  a topological  annulus $P'_{n+1}$ of modulus at least $M/N$,  in the 
connected component of $(f^{k_{n+1}}_\omega)^{-1}(\mathbb D_{\tilde R_0})$ containing the point $z$, and such that $z$ lies in  the bounded component of the complement of the annulus $P'_{n+1}$.

Using the  above observation we  conclude that the annulus $P'_{n+1}$ is contained in the component of
$(f^{k_n}_\omega)^{-1}(\mathbb D_{R_0})$ containing the point $z$; in particular, it is contained in the bounded component of the complement of $P_n'$.

\
In this way, we obtain a nested infinite sequence of disjoint annuli  $P'_n$, all contained in $\mathcal A_\omega$, the point $z$ being in the bounded component of the complement of each of them.

Now let us fix $n$ and consider the topological annulus $\mathcal P_n$  that is bounded by the 
boundaries of $D$ and $D^{k_n}_{j_n}$. Since it contains the nested sequence of annuli  $P'_{1}, P'_{2}, ..., 
P'_{n}$, each of modulus at least $\frac{M}{N}$,  then, by Gr\"otzsch inequality, it must have modulus 
at least $n \frac{M}{N}$ (see, e.g., \cite{BH}, Proposition 5.4  or \cite{McMullen}, Theorem B5).  This in turn means it 
contains an actual geometric annulus of modulus at least $ n \frac{d}{N}-C$ (where $C$ is some 
constant), which separates the components of the boundary of $\mathcal P_n$.  (see, e.g.,  Theorem 2.1 
in\cite{McMullen}).   Since for every $n$  the connected component of $K_\omega$, containing the point $z$, is contained in the bounded component of the complement of $\mathcal P_n$, this implies that the component of $K_\omega$
containing $z$ must have arbitrarily small diameter, i.e. it is the single point $z$.

Since the choice of the 
point $z$ was  arbitrary, finally this means the Julia set is totally disconnected, which concludes the 
proof of Proposition~\ref{prop:sufficient}.
\end{proof}

\section{Typically fast escaping critical point and total disconnectedness}\label{sec:typically}
In this section, we check that the condition formulated in Definiton~\ref{def: fast_escaping} is 
sufficient to prove that the assumptions of  Propositon~\ref{prop:sufficient} are satisfied for $
\mathbb P$--a.e. $\omega$. More precisely, we prove the following.

\begin{theorem}\label{thm:main_step}
Let $V$ be a bounded Borel subset of $\mathbb C$, $V\subset \mathbb D(0,R)$. Let $R_0$, $G$ be the 
values assigned to $R$ as in Section~\ref{sec:prep}.
Let $\mu$ be a Borel probability measure on $V$ and let $\mathbb P$ be the product distribution on $\Omega= V^
\mathbb N$, generated by $\mu$. 

If the critical point $0$ is \emph{typically fast escaping}, i.e., if 
\eqref{eq: fast_escaping} holds, then the assumptions of  Propositon~\ref{prop:sufficient} are 
satisfied for $\mathbb P$--almost every $\omega\in\Omega$.
Thus, for $\mathbb P$-a.e. $\omega\in\Omega$ the Julia set $J_\omega$ is totally disconnected.
\end{theorem}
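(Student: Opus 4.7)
The plan is to verify, for $\mathbb P$-almost every $\omega\in\Omega$, the hypothesis of Proposition~\ref{prop:sufficient}: that there exists $N=N(\omega)\in\mathbb N$ such that for infinitely many $k$, every component $D^k_j(\omega)$ of $(f^k_\omega)^{-1}(D)$ satisfies $\deg(f^k_\omega|_{D^k_j})\le N$.

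First I would compute the local degree explicitly. For a fixed component $D^k_j$, introduce the forward chain $V_i:=f^i_\omega(D^k_j)$, $i=0,1,\dots,k$, so that $V_0=D^k_j$, $V_k=D$, and each $V_i$ is a component of $(f^{k-i}_{\sigma^i\omega})^{-1}(D)$. The map $f_{c_i}\colon V_i\to V_{i+1}$ is proper, and its only possible critical point is $0$, so Riemann--Hurwitz gives $\deg(f_{c_i}|_{V_i})=2$ when $0\in V_i$ and $1$ otherwise. Multiplying through the chain,
\begin{equation*}
\deg(f^k_\omega|_{D^k_j})=2^{m_j},\qquad m_j:=\#\{\,0\le i<k:0\in V_i\,\}.
\end{equation*}
The crucial observation is that $0\in V_i$ forces $V_i$ to be the \emph{unique} component of $(f^{k-i}_{\sigma^i\omega})^{-1}(D)$ containing $0$, which exists if and only if $f^{k-i}_{\sigma^i\omega}(0)\in D$. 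Consequently, uniformly in $j$,
\begin{equation*}
m_j\le N_k(\omega):=\#\bigl\{\,0\le i<k:f^{k-i}_{\sigma^i\omega}(0)\in D\,\bigr\}.
\end{equation*}

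Next I would bound $N_k$ in $L^1(\mathbb P)$ uniformly in $k$. Since $\tilde R_0<R_0^2-R$, an iterate that exceeds $R_0$ is pushed past $\tilde R_0$ at the next step; hence $\{f^n_{\sigma^i\omega}(0)\in D\}$ forces the escape time $k(0,\sigma^i\omega)$ to be at least $n$ up to an additive constant. Combining \eqref{eq: fast_escaping} with the two-sided estimate \eqref{eq:est_green} and using the $\sigma$-invariance of $\mathbb P$ yields
\begin{equation*}
\mathbb P\bigl(f^n_{\sigma^i\omega}(0)\in D\bigr)\le Ce^{-\gamma n},
\end{equation*}
with constants $C,\gamma>0$ independent of $i$ and $n$. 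Summing over $i\in\{0,\dots,k-1\}$,
\begin{equation*}
\mathbb E[N_k]\le\sum_{n=1}^{k}Ce^{-\gamma n}\le\frac{C}{1-e^{-\gamma}}=:M_0,
\end{equation*}
a bound independent of $k$.

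By Fatou's lemma $\mathbb E[\liminf_k N_k]\le M_0$, so $\liminf_k N_k(\omega)<\infty$ for $\mathbb P$-a.e.~$\omega$. For each such $\omega$ there is an integer $M(\omega)$ with $N_k(\omega)\le M(\omega)$ along a subsequence $k_n\to\infty$; setting $N(\omega):=2^{M(\omega)}$ gives $\deg(f^{k_n}_\omega|_{D^{k_n}_j})\le 2^{N_{k_n}(\omega)}\le N(\omega)$ for every component $j$ along this subsequence, and Proposition~\ref{prop:sufficient} then delivers total disconnectedness of $J_\omega$. The main technical point is the deterministic bound $m_j\le N_k$: one must recognise that each chain $(V_i)$ is forced through the critical component at every level where it contains $0$, so the critical levels of any single chain are controlled by the escape behaviour of the shifted critical orbits $\{f^n_{\sigma^i\omega}(0)\}_n$; once this is in hand, the probabilistic part is a routine first-moment-plus-Fatou argument made possible by the exponential tail in \eqref{eq: fast_escaping}.
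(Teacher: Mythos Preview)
Your argument is correct and takes a genuinely different, more elementary route than the paper's proof. The deterministic input---namely that the degree on each component $D^k_j$ is $2^{m_j}$ with $m_j\le N_k(\omega)$---coincides with the paper's Proposition~\ref{prop:A_k}, which phrases the same control in terms of the events $\{\sigma^i\omega\in A_{k-i}\}$ via the Green function; your formulation $\{f^{k-i}_{\sigma^i\omega}(0)\in D\}$ is equivalent up to the translation provided by~\eqref{eq:est_green}. The probabilistic parts, however, diverge. The paper passes to the two-sided product space $\tilde\Omega=V^{\mathbb Z}$, applies Borel--Cantelli to the events $E_k=\{g_{\sigma^{-k}\tilde\omega}(0)\le G/2^k\}$ to produce a fixed $K$ and a set $E$ of positive measure avoiding $\bigcup_{k\ge K}E_k$, and then invokes ergodicity of the shift to ensure almost every orbit returns to $E$ infinitely often; each such return time $k$ forces at most $K$ bad indices among $\{0,\dots,k-1\}$. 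By contrast, you stay in the one-sided space and simply bound $\mathbb E[N_k]$ uniformly in $k$ by summing the exponential tails, then apply Fatou's lemma to conclude $\liminf_k N_k<\infty$ almost surely.

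What each approach buys: yours is shorter, avoids the auxiliary two-sided extension and the appeal to ergodicity, and makes transparent that only the summability $\sum_k \mathbb P(A_k)<\infty$ is used (which is exactly the weakening noted in the paper's Remark following Theorem~\ref{thm:main_step}). The paper's argument, on the other hand, yields a \emph{uniform} degree bound: a single $N=2^K$ works for $\mathbb P$-a.e.\ $\omega$, whereas your $N(\omega)=2^{M(\omega)}$ depends on $\omega$. For the purpose of verifying the hypothesis of Proposition~\ref{prop:sufficient} this distinction is immaterial, but the uniform bound is a slightly stronger conclusion.
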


Actually, the property from \eqref{eq: fast_escaping} is stronger than necessary, since to apply our proof all that is needed is for the series of probabilities to be convergent. In all our applications the bounds are indeed exponential, nevertheless the reader will soon see that the following remark is also true.

\begin{remark*}
The statement of Theorem \ref{thm:main_step} is still true if one replaces \eqref{eq: fast_escaping} with 
$$
\sum\limits_{k =0 }^{\infty} \mathbb P\left(\{\omega\in\Omega: g_\omega(0)<\frac{G}{2^k}\}\right ) < \infty.
$$
\end{remark*}

Define the sets
$$A_k=\{\omega\in\Omega:g_\omega(0)<\frac{1}{2^k}G\}.$$

Before proving Theorem~\ref{thm:main_step} we explain in the  next proposition  the role of the sets $A_k$  in possible application of Proposition~\ref{prop:sufficient}.
We apply the setting and the notation of Theorem~\ref{thm:main_step}.
\begin{proposition}\label{prop:A_k}
(a):
If 
$$\sigma^i\omega\notin A_{k-i}\quad\text{for all}\quad i=0,\dots,k-1$$

then for every connected component  $D^k_j$ of the preimage $(f_\omega^{k})^{-1}(D)$ the degree of the map

$$f^k_\omega:D^k_j\to D$$ is equal to $1$.

(b):
If the above holds for all but $l$ indices then for every connected component of the set $f^{-k}_\omega(D)$  the degree  of 

$$f^k_\omega:D^k_j\to D$$ is bounded above by $N=2^l$.

Here, $D$ is the disc introduced in Lemma~\ref{lem:inter}.
 
\end{proposition}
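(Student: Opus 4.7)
The plan is to reinterpret the condition ``$\sigma^i\omega\notin A_{k-i}$'' as saying that the level-$i$ critical value of the composition $f^k_\omega$ lies outside of $D$; once this dictionary is in place, part (a) will follow from covering space theory and part (b) from a short induction on $k$.

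The first step is to establish the pointwise bound $g_\omega(z)<G$ for every $\omega$ and every $z$ in the open disc $D=\mathbb D_{\tilde R_0}$. On the annulus $\{R_0\le|z|\le\tilde R_0\}$ the estimate $g_\omega\le G$ is built into the definition of $G$; on the inner disc $\mathbb D_{R_0}\subset D$ it follows by applying the maximum principle to the subharmonic function $g_\omega$ on $\mathbb D_{R_0}$; the strict inequality at interior points comes from the strong maximum principle, using that $g_\omega$ is not constant.

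Next, the chain rule identifies the critical points of $f^k_\omega$: $p$ is critical if and only if $f^i_\omega(p)=0$ for some $i\in\{0,\dots,k-1\}$, and in that case the critical value is $f^k_\omega(p)=f^{k-i}_{\sigma^i\omega}(0)$. Iterating the invariance relation~\eqref{eq:invariance} yields
$$g_{\sigma^i\omega}(0)\;=\;2^{-(k-i)}\,g_{\sigma^k\omega}\!\bigl(f^{k-i}_{\sigma^i\omega}(0)\bigr).$$
Combining this with the previous bound, if the level-$i$ critical value sits inside the open disc $D$, then $g_{\sigma^i\omega}(0)<G/2^{k-i}$, i.e.\ $\sigma^i\omega\in A_{k-i}$. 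Contrapositively, the hypothesis $\sigma^i\omega\notin A_{k-i}$ forces the level-$i$ critical value out of $D$, which in turn removes all critical points of $f^k_\omega$ at level $i$ from the preimage $(f^k_\omega)^{-1}(D)$.

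Under the hypothesis of part~(a), $f^k_\omega$ has no critical points inside any $D^k_j$, so the restriction $f^k_\omega\colon D^k_j\to D$ is an unramified proper holomorphic map onto the simply connected disc $D$; connectedness of the source then forces it to be a homeomorphism, i.e.\ a degree-one map. For part~(b) I would run an induction on $k$ via the factorization $f^{k+1}_\omega=f^k_{\sigma\omega}\circ f_{c_0}$: every component $D^{k+1}_j(\omega)$ lies inside $f_{c_0}^{-1}(D^k_{j'}(\sigma\omega))$ for some $j'$, and $f_{c_0}$ restricts to it as a degree-$2$ map exactly when $c_0\in D^k_{j'}(\sigma\omega)$, which by the dictionary above corresponds to $\omega$ sitting in $A_{k+1}$ --- a single new bad index. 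The main, though routine, obstacle is bookkeeping: one must verify the clean identity that the number of bad indices at level $k+1$ for $\omega$ equals the number at level $k$ for $\sigma\omega$ plus the indicator of ``$\omega\in A_{k+1}$'', which is a short index-shift calculation that closes the induction with the bound $\deg\le 2^l$.
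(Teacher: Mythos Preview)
Your argument is correct and follows essentially the same route as the paper: both use the invariance relation~\eqref{eq:invariance} to compare $g_{\sigma^i\omega}(0)$ with the value of Green's function on the relevant domain, concluding that under the hypothesis $\sigma^i\omega\notin A_{k-i}$ the critical point cannot interfere at level~$i$. The only cosmetic difference is that the paper factors $f^k_\omega$ into its $k$ individual quadratic steps and shows $0\notin D^{k-i}_*$ at each intermediate stage (so the degree bound $2^l$ for part~(b) falls out immediately by multiplying step degrees), whereas you phrase everything in terms of critical values of the full composition and then run an induction; your careful treatment of the strict inequality $g_\omega<G$ via the strong maximum principle is in fact slightly more precise than the paper's wording.
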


\begin{proof}
It follows from \eqref{eq:set_G} that for every $\nu\in\Omega$ and for every $z\in D$  $g_\nu(z)\le G$. 
Let $D^k_*$ be some component of $(f_\omega^{k})^{-1}(D)$. Consider the sequence of maps

$$D^k_*\xrightarrow[f_\omega]{}D^{k-1}_*\xrightarrow[f_{\sigma\omega}]{}D^{k-2}_*{}\dots \xrightarrow[f_{\sigma^{k-2}\omega}]{}D^{1}_*\xrightarrow[f_{\sigma^{k-1}\omega}]{}D, $$
where we denoted by $D^{k-i}_*$ the consecutive images of $D^k_*$ under the maps $f_\omega, f_{\sigma\omega}\dots f_{\sigma^{k-1}\omega}$. 
Note that $f^k_\omega:D^k_*\to D$ is just the composition of the above sequence of maps.
If $D^{k-i}_*$ contains the critical point $0$ then $f_{\sigma^{k-i}}:D^{k-i}_*\to D^{k-i-1}_*$ is a degree two map; otherwise it is univalent.

Now, if 
\begin{equation}\label{eq:univ}
\sigma^{i}\omega\notin A_{k-i}
\end{equation}
then $g_{\sigma^i\omega}(0)>\frac{1}{2^{k-i}}G$, while for every $z\in D^{k-i}_*$ we have that 
$$g_{\sigma^i\omega}(z)=\frac{1}{2^{k-i}}g(f_{\sigma^i\omega}^{k-i}(z))<\frac{1}{2^{k-i}}\cdot G$$
This implies that $0\notin  D^{k-i}_*$ and, consequently, the map $f_{\sigma^{i+1}\omega}:D^{k-i}_*\to 
D^{k-i-1}_*$ is univalent.  So, if \eqref{eq:univ} happens for all $i=0,\dots k-1$ then the map

$$f^k_\omega:D^k_*\to D$$
is univalent, so of degree one.

If \eqref{eq:univ} fails to hold for $l$ indices $i$, then for these indices  the degree of  the map  $f_{\sigma^{i+1}\omega}:D^{k-i}_*\to 
D^{k-i-1}_*$ is equal to one or two, while for all other indices it is equal to one, so that the degree of the composition $f^k_\omega:D^k_*\to D$ is at most $N=2^l$.
Propositon ~\ref{prop:A_k} is proved.
\end{proof}

\

\begin{proof}[Proof of Theorem~\ref{thm:main_step}]
We consider now the extended probability space 

$$\tilde\Omega:=V^\mathbb Z,$$
with product probability, which we denote by $\tilde {\mathbb P}$.
The left shift $\sigma$, considered in $\tilde\Omega$ is now a measurable automorphism of the space $\tilde\Omega$. There is a natural measurable projection

$$\pi:(\tilde\Omega, \tilde{\mathbb P})\to(\Omega,\mathbb P)$$

$$\tilde\Omega\ni(\dots c_{-2},c_{-1},c_0,c_1,c_2,\dots)\xrightarrow[]{\pi}(c_0,c_1,c_2\dots)\in\Omega$$
This projection transforms the measure $\tilde{\mathbb P}$ onto the measure $\mathbb P$, i.e., $
\tilde{\mathbb P}\circ \pi^{-1}=\mathbb P$.

\noindent For each $\tilde\omega\in\tilde\Omega$ the iterates $f^n_\omega$ are defined as previously, i.e.,  for $\tilde\omega=(\dots c_{-2}, c_{-1},c_0,c_1,c_2\dots)$
$$f^n_{\tilde\omega}(z)=f_{c_{n-1}}\circ\dots \circ f_{c_1}\circ f_{c_0}(z).$$
The Julia set is  defined analogously to \eqref{eq:julia} and denoted by $J_{\tilde\omega}$.
Similarly, the Green function $g_{\tilde\omega}$ is defined as in \eqref{eq:green}.

Considering the extended space $\tilde\Omega$ in this context may seem artificial, since  the iterates $f^n_{\tilde\omega}$ depend only on the ''future'', i.e.,
only non- negative items $(c_j)_{j\ge 0}$  are used to define  $f^n_{\tilde\omega}$ or its Julia set. 
Nevertheless, the proof is based on the construction of appropriate backward trajectories, which we shall describe below.
\
Let
$$E_k=\{\tilde\omega\in\tilde\Omega: g_{\sigma^{-k}{\tilde\omega}}(0)\le \frac{1}{2^k}G\}, \quad k=0,1,2,\dots$$

Let us note that the following estimate holds.

\begin{proposition}\label{prop:est_Ek}
If the critical point is typically fast escaping, i.e., if \eqref{eq: fast_escaping} holds, then
$$\tilde{\mathbb P}(E_k)<e^{-\gamma k},$$
where $\gamma$ comes from the estimate formulated in \eqref{eq: fast_escaping}.
\end{proposition}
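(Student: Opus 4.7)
The plan is to reduce the estimate for $\tilde{\mathbb P}(E_k)$ directly to the hypothesis \eqref{eq: fast_escaping}, which lives on the one-sided space $(\Omega,\mathbb P)$. The reduction rests on a single observation: the Green function $g_\omega(0)$ is defined through the forward iterates $f^n_\omega(0) = f_{c_{n-1}}\circ\cdots\circ f_{c_0}(0)$, and hence depends only on the non-negative coordinates of $\omega$. In particular, for any $\tilde\omega = (\dots,c_{-1},c_0,c_1,\dots)\in\tilde\Omega$, the value $g_{\sigma^{-k}\tilde\omega}(0)$ will depend only on the string $(c_{-k},c_{-k+1},c_{-k+2},\dots)$, since those are precisely the non-negative coordinates of $\sigma^{-k}\tilde\omega$.

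I would then introduce the projection $T_k:\tilde\Omega\to\Omega$ defined by $T_k(\tilde\omega):=(c_{-k},c_{-k+1},c_{-k+2},\dots)$. Because under $\tilde{\mathbb P}$ the family $\{c_n\}_{n\in\mathbb Z}$ is i.i.d.\ with common law $\mu$, the pushforward $(T_k)_*\tilde{\mathbb P}$ is exactly the product measure $\mathbb P$ on $\Omega$. Combined with the observation above, this gives
\[
\tilde{\mathbb P}(E_k) \;=\; \tilde{\mathbb P}\bigl(\{\tilde\omega : g_{T_k(\tilde\omega)}(0)\le G/2^k\}\bigr) \;=\; \mathbb P\bigl(\{\omega : g_\omega(0)\le G/2^k\}\bigr) \;<\; e^{-\gamma k},
\]
where the last inequality is \eqref{eq: fast_escaping}; the passage from the strict inequality in \eqref{eq: fast_escaping} to the non-strict one defining $E_k$ costs at most an arbitrarily small adjustment of $\gamma$ (or one may simply replace $k$ by $k-1$ and absorb the constant).

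I do not expect any real obstacle. The entire content is the identification of $g_\omega(0)$ as a function of the forward coordinates only, after which the bound becomes a one-line product-measure computation. The two-sided space $\tilde\Omega$ is introduced here only because the authors will need backward trajectories for the main construction that follows; for the present estimate, the extended space plays no essential role beyond carrying the shift $\sigma^{-k}$ as a measure-preserving automorphism.
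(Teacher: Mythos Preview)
Your argument is correct and is essentially the paper's own proof. The paper pulls $A_k$ back to $\tilde A_k=\pi^{-1}(A_k)$, observes $E_k=\sigma^k(\tilde A_k)$, and uses that $\sigma$ is $\tilde{\mathbb P}$-preserving; your map $T_k$ is nothing but $\pi\circ\sigma^{-k}$, so the two computations coincide. Your remark about the strict versus non-strict inequality is a genuine (if harmless) point that the paper passes over silently.
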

\begin{proof}
We have the estimates for the measure $\mathbb P$ of the set  $A_k\subset \Omega$, given by \eqref{eq: fast_escaping}. 

Now, let

$$\tilde A_k:=\pi^{-1}(A_k)=V^{\mathbb N}\times A_k$$

Then, 
$$\tilde{\mathbb P}(\tilde A_k)=\mathbb P(A_k).$$

Now, note that  $E_k=\sigma^{k}(\tilde A_k)$, which implies that $$\tilde{\mathbb P}(E_k)=\tilde{\mathbb P}(\tilde A_k)=\mathbb P(A_k)<e^{-\gamma k}.$$
\end{proof}

It follows from Proposition~\ref{prop:est_Ek} and  Borel--Cantelli Lemma that almost every $\tilde \omega\in\tilde\Omega$ belongs to finitely many sets $E_k$.
This implies that there exists $K\in\mathbb N$ and a set $E\subset \tilde\Omega$ such that

$$\tilde{\mathbb P}(E)>0$$
and
$$E\cap (\bigcup_{k=K}^\infty E_k)=\emptyset.$$

Thus, for every $\tilde\omega\in E$ and every  $k\ge K$ he have that

$$g_{\sigma^{-k}\tilde\omega}(0)>\frac{1}{2^k}G$$
Applying Propositon ~\ref{prop:A_k} 
we obtain immediately the following. 
\begin{corollary}\label{cor:degree}
Let  $\tilde \omega\in E$. Then there exists $N:=2^K$ such that  for every $k\in\mathbb N$, and every connected component $D_j^k(\sigma^{-k}(\tilde\omega))$ the degree of the map

$$f^k_{\sigma^{-k}\omega}:D_j^k (\sigma^{-k}(\tilde\omega))\to D$$ is bounded above by $N$.

\end{corollary}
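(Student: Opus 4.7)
The plan is to apply Proposition~\ref{prop:A_k}(b) to the shifted one-sided sequence $\omega' := \sigma^{-k}\tilde\omega \in \Omega$, after counting how many indices $i \in \{0,\dots,k-1\}$ can possibly violate the non-escape hypothesis $\sigma^{i}\omega' \notin A_{k-i}$ appearing in that proposition.

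First, I would set up a dictionary between the two notations. Observing that $\sigma^{i}\omega' = \sigma^{-(k-i)}\tilde\omega$, the condition $\sigma^{i}\omega' \in A_{k-i}$ unwinds to $g_{\sigma^{-(k-i)}\tilde\omega}(0) < G/2^{k-i}$, which (up to the irrelevant strict-versus-weak inequality) is precisely the defining condition for $\tilde\omega \in E_{k-i}$.

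Next, I would invoke the construction of the set $E$: by its very definition $\tilde\omega \notin E_{j}$ for every $j \geq K$. Specializing to $j = k - i$, this yields $\sigma^{i}\omega' \notin A_{k-i}$ for every index $i$ with $k - i \geq K$, i.e.\ for all $i \in \{0,1,\dots,k-K\}$ provided $k \geq K$. Consequently the hypothesis of Proposition~\ref{prop:A_k}(a) can fail only on the range $i \in \{k-K+1,\dots,k-1\}$, a set of cardinality at most $K-1$.

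Finally, Proposition~\ref{prop:A_k}(b) applied with $\ell \leq K-1$ bounds the degree of $f^{k}_{\omega'}:D^{k}_{j}(\omega')\to D$ by $2^{K-1} \leq N = 2^{K}$, as claimed. The remaining range $k < K$ is absorbed by the trivial observation that any $k$-fold composition of quadratic polynomials has total degree at most $2^{k}\leq 2^{K}$. I do not expect any genuine obstacle here: the only subtle point is the bookkeeping of forward/backward shift indices, keeping the sets $A_{j}\subset\Omega$ (defined in terms of the ``future'' of a one-sided sequence) consistently aligned with the sets $E_{j}\subset\tilde\Omega$ (defined in terms of a past coordinate of the two-sided sequence). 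Beyond this translation, all of the analytic content has already been packaged into Proposition~\ref{prop:A_k}.
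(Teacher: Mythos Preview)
Your proposal is correct and follows exactly the route the paper intends: the paper simply says the corollary is obtained ``immediately'' by applying Proposition~\ref{prop:A_k}, and what you have written is precisely the index bookkeeping needed to justify that application (translating $\tilde\omega\notin E_{j}$ for $j\ge K$ into $\sigma^{i}\omega'\notin A_{k-i}$ for $i\le k-K$, then invoking part~(b) with $l\le K-1$, and finally disposing of the range $k<K$ by the trivial degree bound). The only cosmetic point is that $\sigma^{-k}\tilde\omega$ lives in $\tilde\Omega$, so to literally match the hypothesis of Proposition~\ref{prop:A_k} you should write $\omega'=\pi(\sigma^{-k}\tilde\omega)\in\Omega$; as you yourself note, this is pure notation and does not affect the argument.
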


Now, using ergodicity of the left shift $\sigma$ on $\tilde \Omega$ , we conclude that $\tilde{\mathbb P}$--- almost surely a sequence   $\tilde\nu\in\tilde\Omega$
visits $E$ infinitely many times under the iterates of 
$\sigma$.

\

 Let $k\in\mathbb N$. For $\nu\in\Omega$ we introduce the following.

\

\noindent {\bf Property (K,k):} $\sigma^i\nu\in A_{k-i}$ for more than $K$ indices $i\in\{0,\dots k-1\}$

\begin{lem}\label{lem:*} If Property (K,k) holds for $\nu\in\Omega$ and  $\tilde\nu\in\pi^{-1}(\nu)$,
 then $\sigma^k\tilde\nu\notin E$
\end{lem}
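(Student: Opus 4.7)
The plan is to unwind both sides of the implication in terms of the Green function $g_{(\cdot)}(0)$ and then translate Property (K,k) into membership in the sets $E_j$, after which a pigeonhole argument finishes things.

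First I would observe that the value $g_\omega(0)$ depends only on the ``forward'' coordinates $c_0,c_1,\dots$ of $\omega$, so for any $\tilde\nu\in\pi^{-1}(\nu)$ and any $i\in\mathbb N$ we have $g_{\sigma^i\tilde\nu}(0)=g_{\sigma^i\nu}(0)$. Hence the hypothesis that $\sigma^i\nu\in A_{k-i}$ translates directly into
\[
g_{\sigma^i\tilde\nu}(0)<\tfrac{1}{2^{k-i}}G.
\]

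Next I would rewrite the target condition $\sigma^k\tilde\nu\in E_j$ using the definition of $E_j$: this is equivalent to $g_{\sigma^{-j}(\sigma^k\tilde\nu)}(0)\le \tfrac{1}{2^j}G$, i.e.\ to
\[
g_{\sigma^{k-j}\tilde\nu}(0)\le \tfrac{1}{2^j}G.
\]
Setting $j:=k-i$ this becomes exactly the estimate obtained in the previous paragraph. Thus each index $i\in\{0,\dots,k-1\}$ witnessing Property (K,k) produces a corresponding $j=k-i\in\{1,\dots,k\}$ with $\sigma^k\tilde\nu\in E_j$.

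Finally I would apply a pigeonhole argument. Since Property (K,k) supplies strictly more than $K$ such indices $i$, it yields strictly more than $K$ distinct values $j\in\{1,\dots,k\}$ for which $\sigma^k\tilde\nu\in E_j$. Among any $K+1$ distinct positive integers the largest is at least $K+1$, so in particular there is at least one $j\ge K$ with $\sigma^k\tilde\nu\in E_j$. But $E$ was chosen disjoint from $\bigcup_{j\ge K}E_j$, hence $\sigma^k\tilde\nu\notin E$, as required.

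The only real subtlety here is the bookkeeping of indices when shifting back and forth between the one-sided space $\Omega$ and the two-sided extension $\tilde\Omega$; the rest is just pigeonhole. No nontrivial obstacle beyond correctly matching $i$ to $j=k-i$ and remembering that $g_\omega(0)$ is determined by the forward sequence.
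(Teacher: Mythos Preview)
Your proof is correct and follows essentially the same route as the paper's: translate $\sigma^i\nu\in A_{k-i}$ into $\sigma^k\tilde\nu\in E_{k-i}$ via the substitution $m=k-i$, then conclude from the definition of $E$. You are simply more explicit than the paper about why $g_{\sigma^i\tilde\nu}(0)=g_{\sigma^i\nu}(0)$ and about the pigeonhole step forcing some index $j\ge K$; the paper leaves both of these implicit.
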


\begin{proof}
Indeed, let $\tilde\nu\in\pi^{-1}(\nu)$. Now,  $\sigma^i\nu\in A_{k-i}$ means that 

$$g_{\sigma^i\tilde\nu}(0)=g_{\sigma^i\nu}(0)<\frac{1}{2^{k-i}}G.$$
Putting $m:=k-i$, this can be rewritten as

$$g_{\sigma^{-m}(\sigma^k\tilde\nu)}(0)<\frac{1}{2^m}G.$$
i.e., 
\begin{equation}\label{eq:more}
\sigma^k\tilde\nu\in E_m
\end{equation}

Since \eqref{eq:more} happens for more than $K$ indices $m$, the definition of the set $E$ implies that $\sigma^k\tilde\nu\notin E$.
\end{proof}

\

Let $B$ be the set of elements  $\nu\in\Omega$, for which {\bf Property (K,k)} happens for all but finitely many indices $k$. Put $\tilde B:=\pi^{-1}(B)$. It follows from Lemma ~\ref{lem:*}  that every point $\tilde\nu\in\pi^{-1}(B)$ visits $E$ finitely many times under the iterates of $\sigma$. It thus follows that $\tilde{\mathbb P}(\tilde B)=0$, and, consequently $\mathbb P(B)=0$. 

\

Let $\nu\notin B$. Then for infinitely many positive integers $k$ {\bf Property $(K,k)$} does not hold.  Pick such $k$. Then $\sigma^i\nu\notin A_{k-i}$ 
for all but at most $K$ indices $i\in\{0,\dots ,k-1\}$.  
Thus,  the assumption of Propostion~\ref{prop:A_k}, (b)  is satisfied for all such indices $k$.
Applying this Proposition we see that  the assumption of Proposition ~\ref{prop:sufficient} is satisfied for $\nu$. 
This allows to  conclude that the Julia set $J_\nu$ is totally disconnected for all $\nu\notin B$. This concludes the proof of Theorem~\ref{thm:main_step}.
\end{proof}

\section{Conclusion. Proof of Theorem A and Theorem B.}
In this section we complete  the proofs of Theorem A and Theorem B.
As shown ih Theorem~\ref{thm:main_step},  it is enough to check that the estimate \eqref{eq: fast_escaping} holds
i.e. the critical point is \emph{typically fast escaping} under the assumptions of both theorems.

First let us note that under the assumptions of Theorem A the estimate \eqref{eq: fast_escaping} was actually proved in \cite{BBR} (see Theorem 2.2 in this paper).

\

Obviously Theorem B implies Theorem A, thus let us focus on the more general setting presented in Theorem B. 
We shall conclude the proof of Theorem B with the following Proposition.

\begin{proposition}\label{prop:integr}
Let $V$ be a bounded open set such that $D(0,\frac{1}{4}) \subset V$ and $V \neq D(0,\frac{1}{4})$. Take $\Omega = V^{\mathbb{N}}$ to be the product space equipped with the product of uniform distributions on $V$, denoted by $\mathbb{P}$. 
There exists a constant $\gamma>0$ such that
$$\mathbb P\left(\{\omega\in\Omega: g_\omega(0)<\frac{G}{2^k}\}\right )<e^{-\gamma k}$$
where $G$ is set as in \eqref{eq:set_G}.
\end{proposition}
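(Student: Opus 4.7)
The strategy is to reduce the tail estimate to an exponential bound on the escape time of the critical orbit from the disk $\overline{\mathbb D}_{R_0}$, and then to establish this bound via a Doeblin-type Markov-chain argument requiring a uniform positive lower bound on the escape probability from each starting point in a compact region.

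By \eqref{eq:est_green} one has $g_\omega(0)\ge(\log R_0-1)\,2^{-k(0,\omega)}$, so the event $\{g_\omega(0)<G/2^k\}$ is contained in $\{k(0,\omega)>k-C_0\}$ for the constant $C_0:=\log_2\bigl(G/(\log R_0-1)\bigr)$. It therefore suffices to show $\mathbb P(k(0,\omega)>n)\le Ce^{-\gamma' n}$ for some $C,\gamma'>0$. Since the coordinates $c_n$ are i.i.d.\ under $\mathbb P$, the critical orbit $z_n:=f^n_\omega(0)$ is a time-homogeneous Markov chain on $\mathbb C$, and by the choice of $R_0$ it cannot re-enter $\overline{\mathbb D}_{R_0}$ once it has exited. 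Hence it is enough to prove the uniform escape estimate
$$(\star)\qquad \exists\,n_0\in\mathbb N,\ p_0>0:\quad \inf_{z\in\overline{\mathbb D}_{R_0}}\mathbb P\bigl(\exists\,1\le j\le n_0:\ |f^j_\omega(z)|>R_0\bigr)\ge p_0.$$
Iterating $(\star)$ at times $n_0,2n_0,\dots$ and using the Markov property then yields $\mathbb P(k(0,\omega)>mn_0)\le(1-p_0)^m$, so $\gamma'=-\log(1-p_0)/n_0$ works.

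To prove $(\star)$, I fix $z\in\overline{\mathbb D}_{R_0}$ and construct a non-empty open subset $U_z\subset V^{n(z)}$ of positive $\mu^{n(z)}$-measure such that every $(c_0,\dots,c_{n(z)-1})\in U_z$ produces an orbit leaving $\overline{\mathbb D}_{R_0}$. By joint continuity of $(z,c_0,\dots,c_{n-1})\mapsto f^n_{(c_i)}(z)$, the same $U_z$ works on a whole neighborhood of $z$, and compactness of $\overline{\mathbb D}_{R_0}$ yields a finite subcover with uniform $n_0$ and $p_0$. The escape sequence is produced by a ``kick-and-amplify'' procedure: the hypothesis that $V$ is open and strictly contains $\mathbb D(0,1/4)$ makes $W:=V\setminus\overline{\mathbb D(0,1/4)}$ a non-empty open set of parameters with $|c|>1/4$, so a preliminary composition with parameters in $\mathbb D(0,1/4)\subset V$ steers the orbit to a position $z_j$ whose square is aligned in argument with some $c^*\in W$; a single step with $c_j$ near $c^*$ then gives $|z_{j+1}|=|z_j^2+c_j|>1/2$. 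Subsequent iterations with $c_i\in V$ chosen aligned with $z_i^2$ progressively amplify the modulus; once $|z_i|$ exceeds the threshold $R_*:=\tfrac12\bigl(1+\sqrt{1+4r_V}\bigr)$ with $r_V:=\sup_{c\in V}|c|$, every further iterate satisfies $|z_{i+1}|>|z_i|$ and the orbit escapes past $R_0$ in a controlled number of additional steps.

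The main obstacle is justifying the preliminary steering step for \emph{every} starting point $z$, in particular in the case $V\subset\mathcal M$ when no single autonomous orbit escapes. The conceptually cleanest resolution is to consider the compact joint orbit set
$$X_z\;:=\;\overline{\bigcup_{n\ge 0}\bigl\{f^n_{(c_i)}(z):(c_0,\dots,c_{n-1})\in V^n\bigr\}}$$
and show that it is unbounded. A bounded $X_z$ of radius $r_z$ would be forward-invariant under every $f_c$, $c\in V$; together with the inclusion $\mathbb D(0,1/4)\subset V\subset\overline{\mathbb D(-(w^*)^2,r_z)}$ derived from each extremal point $w^*\in X_z$, this forces $r_z^2+1/4\le r_z$ and hence $r_z=1/2$. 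The extremal set $E:=X_z\cap\partial\mathbb D_{1/2}$ is then invariant under the ``doubling'' map $w\mapsto 2w^2$ (arising as a limit of $f_c(w^*)$ for $c\to(w^*)^2$ from inside $\mathbb D(0,1/4)$), and a case analysis combining this invariance with the openness of $V$ and the strict containment $V\supsetneq\mathbb D(0,1/4)$ produces enough extremal points in $E$ to force $V\subset\overline{\mathbb D(0,1/4)}$, contradicting the hypothesis $V\neq\mathbb D(0,1/4)$. Hence $X_z$ must be unbounded, which immediately yields the open set $U_z$ of escape sequences required by $(\star)$.
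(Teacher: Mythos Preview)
Your overall strategy coincides with the paper's: reduce the Green's-function tail to an exponential escape-time bound via \eqref{eq:est_green}, then establish the uniform escape estimate $(\star)$ by a Doeblin--Markov argument, the heart of which is constructing for each starting point an explicit finite parameter string that drives the orbit past $R_0$. The paper carries this out directly (its Lemma~\ref{lem:generalpar}): for $|z|<\tfrac12$ one first uses a single parameter in $\mathbb D(0,\tfrac14)$ to land near the origin with a prescribed argument, then mimics the real parabolic dynamics $x\mapsto x^2+\tfrac14$ (which pushes every $x\in[0,\tfrac12)$ toward $\tfrac12$) with parameters whose arguments are aligned step by step, reaching a preassigned point $c'$ with $|c'|=\tfrac12-\tfrac\varepsilon2$ and $\arg((c')^2)=\arg(c^*)$; one application of $c^*\in V$ with $|c^*|>\tfrac14$ then kicks the modulus above $\tfrac12$, after which the same alignment trick, now modelling the diverging real dynamics $x\mapsto x^2+\tfrac14$ for $x>\tfrac12$, sends the orbit past $R_0$. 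Your ``kick-and-amplify'' paragraph is this same construction in outline.

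The gap is in your alternative justification of the steering step via the reachable set $X_z$. You correctly derive that a bounded $X_z$ forces $r_z=\tfrac12$ and that the extremal set $E=X_z\cap\partial\mathbb D_{1/2}$ is forward-invariant under angle doubling. But doubling invariance of a nonempty closed $E\subset\partial\mathbb D_{1/2}$ is far from forcing $E$ to be large: $E=\{\tfrac12\}$ (the fixed angle $0$) is already doubling-invariant, and for that single extremal point the constraint $V\subset\overline{\mathbb D(-\tfrac14,\tfrac12)}$ does \emph{not} contradict $V\supsetneq\mathbb D(0,\tfrac14)$ (take e.g.\ $V=\mathbb D(0,\tfrac14)\cup\mathbb D(-\tfrac38,\eta)$). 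So the ``case analysis'' you invoke cannot proceed from doubling invariance alone. What is actually needed is that $E=\partial\mathbb D_{1/2}$, equivalently $X_z\supset\overline{\mathbb D_{1/2}}$; and the only evident way to obtain this is to use forward invariance of the full set $X_z$ under all $f_c$, $c\in\overline{\mathbb D(0,\tfrac14)}$, and iterate the parabolic growth $r\mapsto r^2+\tfrac14$ starting from $\overline{\mathbb D(0,\tfrac14)}\subset X_z$ --- which is precisely the paper's explicit steering argument in disguise. In short, your $X_z$ reformulation is correct but not a shortcut: completing it requires the same parabolic construction the paper uses, and the doubling-invariance observation about $E$ is a red herring.
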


\begin{proof}
To prove Proposition ~\ref{prop:integr} we shall use the estimates \eqref{eq:est_green}. We also  need the lemma, which follows the general idea of the  proof of Theorem 2.2 in \cite{BBR}:

\begin{lem}\label{lem:generalpar}
Let $V$ be an open and bounded set,  such that $\mathbb{D}(0,\frac{1}{4}) \subset V\subset \mathbb D(0,R)$  and $V \neq \mathbb{D}(0,\frac{1}{4})$. 
Consider the space $\Omega = V^{\mathbb{N}}$ with the product of uniform distributions on $V$. Then there exists $\gamma>0$ such that for every $z\in\mathbb C$
$$
\mathbb P(k(z, \omega) > k) \leqslant e^{- \gamma k},
$$
where $k(z,\omega)$ is the escape time of $z$  from the disc $\mathbb D_{R_0}$, defined in Definition~ \ref{def:escape}.
\end{lem}

\begin{proof}
Let $c \in V$ be a point such that $|c| > \frac{1}{4}$, say $|c| > \frac{1}{4} + \varepsilon$ for some small $\varepsilon > 0$.

 Let us pick a point $c' \in \mathbb{D}(0,\frac{1}{2})$  (not neccesarily in $V$), 
 such that $|c'| = \frac{1}{2} -  \frac{\varepsilon}{2}$ and $\arg(c') = \frac{\arg(c)}{2}$. 
 In particular, pick $\varepsilon$ small enough so that $\frac{1}{2} -  \frac{\varepsilon}{2} > 0$. 
Observe that for the parabolic map  $f(w) = w^2 + \frac{1}{4}$  we have

\begin{equation}\label{eq:realkwad}
f^n(w)\xrightarrow[n\to\infty]{}\frac{1}{2}
\end{equation}
for every real $w<\frac{1}{2}$.

Consider first $z$ such that  $|z|< \frac{1}{2}$.
We claim that one can choose $N\in\mathbb N$ and the  parameters $c_1,c_2, ..., c_N \in \mathbb{D} (0,\frac{1}{4})$ in a way that $f^N_{\omega}(z) = c'$. 
Indeed,
first note that, since $|z|<\frac{1}{2}$, the set
$$\left \{z^2+c: c\in \mathbb D \left (0,\frac{1}{4} \right )\right \}$$
contains the disc $\{w:|w|< \rho\}$, where $\rho=\frac{1}{4}-|z|^2>0$. So, we can choose $c_0$ such that, putting $w= z^2+c_0$ we have
$|w|<\rho$, and, adjusting $c_0$, we can additionally achieve that the argument of $w$ is 
as we wish.

Using \eqref{eq:realkwad} we  find $N>0$ and real parameters $\tilde  c_1,\dots,\tilde c_{N-1}\in (0,\frac{1}{4})$ such that
$$f^{N-1}_{\tilde c_{N-1},\dots,\tilde c_1}(|w|)=|c'|.$$
Now, choosing appopriate $c_0$, we adjust the argument of $w$ in such a way that
\begin{equation}\label{eq:arg_w}
\left [2^{N-1}\arg(w) =2^{N-1}\arg (f_{c_0}(z)\right ]_{{\rm mod}2\pi}=\arg(c')
\end{equation}
Next, for $n=1,\dots,  N-1$,  we choose $c_n$ in such a way that $|c_n|=\tilde c_n$ and 
$$\arg(c_{n})=\arg\left ((f^{n}_{c_{n-1},\dots c_1,c_0}(z))^2\right )$$
so that
 $$
 \aligned
 |f^{n+1}_{c_n,c_{n-1},\dots, c_1,c_0}(z)|&=|(f^n_{c_{n-1},\dots c_1,c_0}(z))^2+c_n|= |f^{n}_{c_{n-1},\dots c_1,c_0}(z))|^2+|c_n|\\
 &=|f^{n}_{c_{n-1},\dots c_1,c_0}(z))|^2+\tilde c_n,
 \endaligned
 $$
and, in consequence, $|f^N_{c_{N-1},\dots c_0}(z)|=|c'|$ and $\arg(f^N_{c_{N-1},\dots c_0}(z))=\arg(c')$, thus $f^N_{c_{N-1},\dots c_0}(z)=c'$.
Now since $f^N_{c_{N-1},\dots c_0}(z) = c'$ and $\arg [(c')^2]=\arg (c)$,  then, putting  $c_{N}: = c$ we obtain  
$$|f^{N+1}_{c_{N+1},\dots c_0}(z)|=|c'|^2+c >\left (\frac{1}{2}-\frac{\varepsilon}{2}\right )^2+\frac{1}{4}+\varepsilon> \frac{1}{2}.$$

Recall that for $v$ real,  $v > \frac{1}{2}$ we have  $f^n(v)\xrightarrow[n\to\infty]{}\infty$. This means we can pick parameters $c_{N+2}, c_{N+3}, ..., c_{N + N_1-1} \in \mathbb{D} (0,\frac{1}{4})$ for some $N_1$ (again, adjusting the argument appropriately)  in such a way that $|f^{N+N_1}_{c_{N+N_1-1,\dots c_0}}(z)| > R_0+1$. 
For $|z|>\frac{1}{2}$ we obtain the same statement even in a easier way; one only has to repeat the second part of the  reasoning above.
The case of $z$ with $|z|=\frac{1}{2}$ needs a small modification: choosing an appropriate $c_0$ in   $ \mathbb D(0,\frac{1}{4})$, we obtain $|z^2+c_0|<\frac{1}{2}$ and the previously 
described procedure applies.   

So, finally, we checked the following: For every $z\in\mathbb C$, there exists $M=M_z$
and a sequence $c_0,c_1,\dots, c_{M}$, $c_i\in V$, such that 
$$|f^M_{c_{M-1},\dots ,c_0}(z) |>R_0+1.$$

Clearly, the same is true with $c_i$ slightly perturbed, so, if we take $\delta>0$ sufficiently small and put
$$A_z=\mathbb D(c_0,\delta)\times\dots\times \mathbb D(c_{M-1},\delta)\times \mathbb D(0,R)$$
then $\mathbb P(A_z)>0$ and, for all $\omega\in A_z$, $$|f^M_\omega(z)|>R_0+\frac{1}{2}.$$
Since the family 

$$\{{f^M_\omega}_{|\mathbb D_{R_0}},  \omega\in\Omega\}$$
is equicontinuous, we conclude that there exists $U_z\ni z$, an open neighbourhood of $z$ such that for all $v\in U_z$, $\omega\in A_z$, $|f^M_\omega(v)|>R_0$, and because of \eqref{eq:1}, for all $N\ge M$ there holds

$$|f^N_\omega(v)|>R_0.$$

By compactness of $\overline{\mathbb D}_{R_0}$, there exists a finite cover  of $\overline{\mathbb D}_{R_0}$   by a finite collection of the sets $U_{z_i}$.  
Taking $\alpha:=\min_ {z_i}\mathbb P(A_{z_i})$ and $M=\max_{z_i} M_{z_i}$, we can write 

$$
\exists_{M} \exists_{\alpha>0} \forall_z \mathbb{P} (\{ |f^N_{\omega}(z)|>R_0\}) > \alpha.
$$
In other words, putting $S^k (z) = \{\omega \in \Omega : f^k_{\omega} (z) < R_0  \}$,  we know  that for any $z$ we have $\mathbb{P} (S^N (z)) < 1 - \alpha$.

We proceed to estimate $\mathbb P(S^k(z))$ exactly like in \cite{BBR}, using the fact that $\mathbb P$ is the product measure :

$$\mathbb P(S^{k + N} (z)) = \int_{S^k (z)} \mathbb P(S^N (f^k_\omega (z))) d \mathbb P(
\omega)\leqslant (1- \delta) \mathbb P (S^k (z))$$

which applied repeatedly yields the existence of a constant $\gamma > 0$ such that $$\mathbb P(k(z,\omega)>k)=\mathbb P (S^k (z)) \leqslant e^{-k \gamma}.$$ 

\end{proof}

Applying the above result for $z=0$, together with the previously established~\eqref{eq:est_green},
yields the claim, with possibly modified constant $\gamma$.
This ends the proof of Proposition ~\ref{prop:integr}.
\end{proof}

It is important to point out that Lemma \ref{lem:generalpar} is the only part of the proof of the main result that uses the assumption on the parameter space, i.e. that it contains points from outside of the disk $D(0,\frac{1}{4})$. As mentioned before, if $R \leqslant \frac{1}{4}$ then the resulting Julia set is always connected, thus the proof above illustrates exactly the role this assumption fulfills. 

Taking $V$ to be the main cardioid yields the following interesting corollary of Theorem B.

\begin{theorem}\label{thm:cardioid}
Let $\Omega = B^{\mathbb{N}}$ where $B$ is the main cardioid of the Mandelbrot set, and let $\Omega$ be equipped with the product of uniform distributions on $V$. 
Then for almost every sequence $\omega \in \Omega$ the Julia set $J_\omega$ is totally disconnected.
\end{theorem}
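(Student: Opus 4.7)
The plan is to derive Theorem~\ref{thm:cardioid} as an immediate consequence of Theorem~B by verifying, with $V := B$ taken to be the main cardioid of the Mandelbrot set, the three hypotheses of Theorem~B: $B$ must be open and bounded, must contain $\mathbb{D}(0,1/4)$, and must differ from $\mathbb{D}(0,1/4)$. Openness and boundedness of $B$ will follow from the standard parametrization $B = \phi(\mathbb{D}(0,1))$ with $\phi(\mu) = \mu/2 - \mu^2/4$, a map which I would check is injective on the open unit disk: if $\phi(\mu_1) = \phi(\mu_2)$ with $\mu_1 \neq \mu_2$, then dividing by $\mu_1 - \mu_2$ forces $\mu_1 + \mu_2 = 2$, which is impossible when both moduli are strictly less than $1$. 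Thus $\phi$ is a biholomorphism from $\mathbb{D}(0,1)$ onto $B$, so $B$ is open, bounded and simply connected.

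The main geometric step is the inclusion $\mathbb{D}(0,1/4) \subset B$. To establish it, I would parametrize the boundary $\partial B$ by $\mu = e^{i\theta}$ and compute
\begin{equation*}
|\phi(e^{i\theta})|^2 = \frac{5}{16} - \frac{\cos\theta}{4},
\end{equation*}
which attains its minimum value $1/16$ exactly at $\theta = 0$, corresponding to the cusp $c = 1/4$. Hence every point of $\partial B$ lies at Euclidean distance at least $1/4$ from the origin, with equality only at the cusp; since the cusp itself does not belong to the open disk $\mathbb{D}(0,1/4)$ and $B$ is open, the inclusion $\mathbb{D}(0,1/4) \subset B$ follows. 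The strict inequality $B \neq \mathbb{D}(0,1/4)$ is then trivial: for instance $-1/2 = \phi(1 - \sqrt{3})$ lies in $B$ (since $|1 - \sqrt{3}| < 1$) but has modulus $1/2 > 1/4$.

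Having verified these three hypotheses, I would invoke Theorem~B with $V = B$ and the uniform distribution on $B$ to conclude that the Julia set $J_\omega$ is totally disconnected for $\mathbb{P}$-almost every $\omega \in B^{\mathbb{N}}$. I do not expect any serious obstacle: the entire dynamical content has been absorbed into Theorem~B and ultimately into Proposition~\ref{prop:integr}, which needed only the existence of parameters outside $\overline{\mathbb{D}(0,1/4)}$, and this corollary reduces to the elementary geometric fact that the open disk of radius $1/4$ is properly contained in the main cardioid.
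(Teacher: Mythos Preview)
Your proposal is correct and follows exactly the paper's approach: the paper presents Theorem~\ref{thm:cardioid} as an immediate corollary of Theorem~B, obtained simply by taking $V$ to be the main cardioid. The paper does not even verify the hypotheses of Theorem~B for the cardioid, treating the inclusion $\mathbb{D}(0,\tfrac14)\subset B$ and the inequality $B\neq\mathbb{D}(0,\tfrac14)$ as well-known; your explicit verification via the parametrization $\phi(\mu)=\mu/2-\mu^2/4$ and the boundary computation $|\phi(e^{i\theta})|^2=5/16-(\cos\theta)/4$ is a welcome addition but not a different route.
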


\section{Further generalizations.}\label{sec:remarks}

A number of other generalizations can be made by simple adaptations of the proof.
For instance  it can be seen by inspecting the proof of  Lemma ~\ref{lem:generalpar} that  the uniform distribution  does not  play any important role.

\begin{theorem}
Let $R > \frac{1}{4}$, and let $\mu$ be a Borel probability distribution on $\mathbb 
D(0,R)$ such that  ${\rm supp}(\mu)\supset \mathbb D\left (0,\frac{1}{4}\right )$ and  $\mu 
\left (\mathbb{D}(0,R) \setminus (\overline{\mathbb{D}(0,\frac{1}{4})}\right ) > 0$.
 Now consider the product measure of $\mu$ on $\mathbb D(0,R)^{\mathbb{N}}$. The Julia set for a sequence $\{c_n \} \subset \mathbb D(0,R)^{\mathbb{N}}$
 is almost always totally disconnected, with respect to this product measure.
\end{theorem}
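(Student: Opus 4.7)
The plan is to reduce this generalization to Theorem~\ref{thm:main_step} by verifying that the critical point is typically fast escaping under the new hypothesis on $\mu$. By the two-sided bound \eqref{eq:est_green} relating $g_\omega(0)$ to the escape time $k(0,\omega)$ of $0$ from $\mathbb{D}_{R_0}$, the required estimate
$$\mathbb{P}(\{\omega : g_\omega(0) < G/2^k\}) < e^{-\gamma k}$$
is (up to changing $\gamma$) equivalent to the exponential tail bound $\mathbb{P}(k(0,\omega) > k) < e^{-\gamma k}$. Hence it suffices to prove the analogue of Lemma~\ref{lem:generalpar} in this broader setting.

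First I would reread the proof of Lemma~\ref{lem:generalpar} and identify precisely which properties of the uniform distribution on $V$ it uses. The combinatorial heart of that argument is purely dynamical: for every $z \in \overline{\mathbb{D}}_{R_0}$ it constructs a finite sequence $(c_0, c_1, \ldots, c_{M-1})$ such that $|f^M_{c_{M-1}, \ldots, c_0}(z)| > R_0 + 1$, with $c_0, \ldots, c_{N-1}$ and $c_{N+1}, \ldots, c_{M-1}$ lying in $\mathbb{D}(0, \tfrac{1}{4})$, and $c_N = c$ a fixed point of $\mathbb{D}(0,R)$ with $|c| > \tfrac{1}{4}$. The only place the measure enters is in asserting that, for small enough $\delta>0$, the product box
$$A_z = \mathbb{D}(c_0, \delta) \times \cdots \times \mathbb{D}(c_{M-1}, \delta) \times \mathbb{D}(0, R) \times \mathbb{D}(0, R) \times \cdots$$
satisfies $\mathbb{P}(A_z) > 0$.

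Next I would verify this positivity under the new assumptions. For each $i \neq N$ the center $c_i$ lies in $\mathbb{D}(0, \tfrac{1}{4})$, and since ${\rm supp}(\mu) \supset \mathbb{D}(0,\tfrac{1}{4})$ we get $\mu(\mathbb{D}(c_i, \delta)) > 0$ for every $\delta > 0$. For the outside parameter, the hypothesis $\mu(\mathbb{D}(0,R) \setminus \overline{\mathbb{D}(0,\tfrac{1}{4})}) > 0$ implies that ${\rm supp}(\mu)$ meets this open set, so we may choose $c$ in that intersection, guaranteeing $\mu(\mathbb{D}(c,\delta)) > 0$ for all $\delta > 0$. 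By the product structure of $\mathbb{P}$, multiplying finitely many positive factors yields $\mathbb{P}(A_z) > 0$, exactly as needed.

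The remaining steps are measure-agnostic and carry over verbatim: equicontinuity of $\{f^M_\omega|_{\mathbb{D}_{R_0}}\}_{\omega \in \Omega}$ upgrades the single point $z$ to a neighborhood $U_z$; compactness of $\overline{\mathbb{D}}_{R_0}$ extracts a finite subcover and produces uniform constants $M$ and $\alpha > 0$ with $\mathbb{P}(S^M(z)) < 1 - \alpha$ for every $z$, where $S^k(z) = \{\omega : |f^k_\omega(z)| < R_0\}$; and the Markov-style estimate
$$\mathbb{P}(S^{k+M}(z)) = \int_{S^k(z)} \mathbb{P}\bigl(S^M(f^k_\omega(z))\bigr)\, d\mathbb{P}(\omega) \le (1-\alpha)\,\mathbb{P}(S^k(z))$$
iterates to give $\mathbb{P}(S^k(0)) \le e^{-\gamma k}$ for some $\gamma > 0$. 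Combined with \eqref{eq:est_green} this is exactly \eqref{eq: fast_escaping}, and Theorem~\ref{thm:main_step} finishes the job. I do not expect a serious obstacle: the entire content is to track where uniformity was invoked in Lemma~\ref{lem:generalpar} and replace it with the two explicit $\mu$-positivity hypotheses; the only subtle point is choosing the outside parameter $c$ inside ${\rm supp}(\mu)$ so that small perturbations remain detectable by $\mu$.
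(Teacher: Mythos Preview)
Your proposal is correct and follows exactly the paper's approach: the paper simply remarks that ``it can be seen by inspecting the proof of Lemma~\ref{lem:generalpar} that the uniform distribution does not play any important role,'' and you have carried out precisely that inspection, correctly isolating the single place where the measure enters (positivity of the product box $A_z$) and verifying it from the two hypotheses on $\mu$. Your treatment is in fact more explicit than the paper's own justification.
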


The following result comes from \cite{QIU}, Theorem 2.2  but can also be inferred easily from our proof.

\begin{remark*}
For every $c \notin \mathbb{M}$ there exists a neighbourhood $U(c)$ such that $J(c_n)$ is totally disconnected if all $c_n \in U(c)$. 
\end{remark*}
Indeed, in this case it is easy to see that 
$$\inf_{\omega=(c_n),c_n\in U}g_\omega(0)>a>0$$ for some constant $a$, depending on $U$. So, with $K$ sufficiently large, the set $E$ defined in Section~\ref{sec:typically} is just the whole space $\tilde\Omega$. By Lemma ~\ref{lem:*} we conclude that for every $\nu\in \Omega=U(c)^\mathbb N$ and for all $k$
$$\sigma^i\nu\notin A_{k-i}$$ happens for all but all most $K$ indices $i\in\{0,\dots, k-1\}$, which, by Proposition~\ref{prop:A_k} and Proposition~\ref{prop:sufficient} 
immediately implies that every Julia set $J_\omega$ is totally disconnected. 
 
\
Note that another easy adaptation of the proof yields an answer to a question from \cite{BBR} (see Remark 2.5 in \cite{BBR}), whether 
we can choose the parameters randomly, according to the uniform distribution,  from a circle of radius $\delta>1/4$.

Actually, the authors ask in Remark 2.5 in \cite{BBR}  if the set Julia set is almost surely disconnected. Our approach gives much more: 

\begin{proposition}
Let $\Omega = \partial \mathbb D(0,R)^{\mathbb{N}}$ where $R > \frac{1}{4}$ be equipped with the product of uniform distributions on the circle $\partial\mathbb D(0,R)$. 
Then for almost every $\omega \in \Omega$ the Julia set $J_\omega$ is totally disconnected.
\end{proposition}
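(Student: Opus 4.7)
The plan is to reduce the statement to Theorem~\ref{thm:main_step}, exactly as in the proofs of Theorems A and B. That is, it suffices to verify the typical fast escape condition \eqref{eq: fast_escaping} for the critical point $z=0$ under the product of uniform distributions on $\partial\mathbb D(0,R)$. Once this is done, Theorem~\ref{thm:main_step} yields $J_\omega$ totally disconnected for $\mathbb P$-a.e.\ $\omega$. By the estimate \eqref{eq:est_green} the condition $g_\omega(0)<G\cdot 2^{-k}$ implies $k(0,\omega)>k-C$ for a constant $C$ depending only on $R_0$ and $G$, so it is enough to establish the direct analogue of Lemma~\ref{lem:generalpar} for the one-dimensional support $\partial\mathbb D(0,R)$.

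The obstacle in copying Lemma~\ref{lem:generalpar} verbatim is that its proof exploits in an essential way the existence of parameters strictly inside $\mathbb D(0,\tfrac14)$: one slowly steers the orbit toward the parabolic fixed point of $z^2+\tfrac14$ using real parameters in $(0,\tfrac14)$, and only then applies a ``kick'' with $|c|>\tfrac14$. Here, by contrast, every admissible parameter lies on the circle $|c|=R>\tfrac14$, so this detour is unavailable. The key new observation is that this actually makes the construction shorter. For any $z\ne 0$ choose $c_0\in\partial\mathbb D(0,R)$ with $\arg(c_0)=\arg(z^2)$; then $|f_{c_0}(z)|=|z|^2+R$, and for $z=0$ we trivially get $|f_{c_0}(0)|=R$. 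Iterating this recipe, the moduli $r_n=|f^n_\omega(z)|$ satisfy $r_{n+1}=r_n^2+R$, and since $R>\tfrac14$ the quadratic $r\mapsto r^2+R$ has no real fixed point, so $r^2+R>r$ for every real $r$. Therefore $r_n\to\infty$ monotonically, and there exists $M=M_z$ with $r_M>R_0+1$.

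Having produced the deterministic escaping sequence $c_0,\dots,c_{M-1}\in\partial\mathbb D(0,R)$, the remainder follows the template of Lemma~\ref{lem:generalpar}, with each two-dimensional disc $\mathbb D(c_i,\delta)$ replaced by a one-dimensional arc $I_i\subset\partial\mathbb D(0,R)$ of length $\delta$ centered at $c_i$. The uniform distribution on the circle assigns positive length to each such arc, so the cylinder set $A_z=I_0\times\dots\times I_{M-1}\times\partial\mathbb D(0,R)^{\mathbb N}$ has $\mathbb P(A_z)>0$. Strict monotonicity of the recursion gives a positive margin in the escape inequality, so by continuity of $(\omega,z)\mapsto f^M_\omega(z)$ we may shrink $\delta$ to ensure $|f^M_\omega(z')|>R_0$ for all $\omega\in A_z$ and all $z'$ in some open neighbourhood $U_z$ of $z$. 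A finite subcover of $\overline{\mathbb D}_{R_0}$ by such neighbourhoods $U_{z_1},\dots,U_{z_m}$ yields a single $M$ and a single $\alpha=\min_i\mathbb P(A_{z_i})>0$ with $\mathbb P(\{|f^M_\omega(z)|\le R_0\})<1-\alpha$ for every $z\in\overline{\mathbb D}_{R_0}$. The Markov-type iteration verbatim from Lemma~\ref{lem:generalpar} then gives $\mathbb P(k(0,\omega)>k)\le e^{-\gamma k}$, which in turn gives \eqref{eq: fast_escaping} after adjusting $\gamma$.

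The only real subtlety, which I expect to be the main obstacle to formalize cleanly, is the realization that the parabolic detour of Lemma~\ref{lem:generalpar} is unnecessary once the parameter space avoids $\overline{\mathbb D(0,\tfrac14)}$: the strict inequality $r^2+R>r$ available for $R>\tfrac14$ replaces the more delicate parabolic argument and requires no two-dimensional room in parameter space. Everything else is purely bookkeeping—arcs in place of discs, with the product structure of $\mathbb P$ unchanged—so the adaptation is routine once this observation is in hand.
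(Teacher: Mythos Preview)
Your proposal is correct and follows essentially the same approach as the paper: reduce to Theorem~\ref{thm:main_step} via the analogue of Lemma~\ref{lem:generalpar}, using the argument-alignment trick $\arg(c_n)=\arg\!\big((f^{n}_\omega(z))^2\big)$ so that the moduli obey the real recursion $r_{n+1}=r_n^2+R$, which escapes since $R>\tfrac14$. The paper's proof is terser but makes exactly the same observation that the parabolic detour of Lemma~\ref{lem:generalpar} is unnecessary here, and then invokes the same continuity/compactness and product-measure iteration to finish.
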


To repeat our proof in the above case we need the following version of Lemma \ref{lem:generalpar}.

\begin{lem}
Let $K = \partial \mathbb D(0,R)$ where $R > \frac{1}{4}$. Consider the space $\Omega = K^{\mathbb{N}}$ with the product of uniform distributions on $K$. 
Then there exists  $\gamma>0$ such that for all $z\in\mathbb C$,
$$
\mathbb P(k(z, \omega) > k) \leqslant e^{- \gamma k},
$$
where $k(z,\omega)$ is the value defined in \eqref{eq:escape}.
\end{lem}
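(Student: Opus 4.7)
The plan is to imitate the proof of Lemma~\ref{lem:generalpar} step by step, replacing only the first step, which is the one that produces a deterministic finite sequence of parameters driving an arbitrary initial point $z$ out of $\mathbb{D}_{R_0}$. In the disk case that step used interior parameters from $\mathbb{D}(0,\tfrac14)$ together with the parabolic dynamics of $f_{1/4}$ first to herd $z$ near $\tfrac12$ and then to kick it out. On the circle $K=\partial\mathbb{D}(0,R)$ these interior parameters are unavailable, so a different --- and in fact simpler --- construction is needed; everything after step~1 then transfers verbatim.

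For the new step~1, given $z\in\mathbb{C}$, set $w_0=z$ and, for $n\geq 0$, choose $c_n\in K$ with $\arg(c_n)=2\arg(w_n)$ (any $c_n\in K$ if $w_n=0$). Defining $w_{n+1}=w_n^{2}+c_n$, the vectors $w_n^{2}$ and $c_n$ lie on the same ray from the origin, so
$$|w_{n+1}|=|w_n|^2+R.$$
Since $R>\tfrac14$, the quadratic $t^2-t+R$ has negative discriminant, whence $t^2+R>t$ for all real $t\geq 0$. Thus $|w_n|$ is strictly increasing and unbounded, and there exists a least $M=M_z$ with $|w_M|>R_0+2$.

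From this point the argument copies Lemma~\ref{lem:generalpar}. Continuity of the polynomial map $(\tilde c_0,\dots,\tilde c_{M-1})\mapsto f^M_{\tilde c_{M-1},\dots,\tilde c_0}(z)$ yields $\delta>0$ such that perturbing each $c_i$ to anywhere in the arc $K\cap\mathbb{D}(c_i,\delta)$ still gives $|f^M(z)|>R_0+1$. Each such arc carries positive uniform measure on $K$, so
$$A_z=\bigl(K\cap\mathbb{D}(c_0,\delta)\bigr)\times\dots\times\bigl(K\cap\mathbb{D}(c_{M-1},\delta)\bigr)\times K^{\mathbb{N}}$$
satisfies $\mathbb{P}(A_z)>0$. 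Equicontinuity of $\{f^M_\omega|_{\overline{\mathbb{D}}_{R_0}}:\omega\in\Omega\}$ produces an open neighbourhood $U_z\ni z$ such that $|f^N_\omega(v)|>R_0$ for every $v\in U_z$, $\omega\in A_z$, and $N\geq M$ (using also the invariance~\eqref{eq:1}). A finite sub-cover of $\overline{\mathbb{D}}_{R_0}$ by such neighbourhoods yields uniform constants $M^\ast$ and $\alpha>0$, and the product-measure Markov-type estimate reproduced at the end of Lemma~\ref{lem:generalpar} iterates to $\mathbb{P}(k(z,\omega)>k)\leq e^{-\gamma k}$ for some $\gamma>0$.

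The only mildly delicate point is thus step~1, and the essential insight is that $R>\tfrac14$ is exactly the condition making the real iteration $t\mapsto t^2+R$ have no fixed point, so every nonnegative orbit of this one-dimensional map escapes to infinity. Aligning $\arg(c_n)$ with $\arg(w_n^{2})$ then promotes this one-dimensional fact to a statement about the non-autonomous orbit on $\mathbb{C}$, bypassing the need for the parabolic-herding trick used in the disk setting.
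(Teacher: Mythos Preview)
Your proof is correct and follows essentially the same approach as the paper. Both arguments align $\arg(c_n)$ with $\arg(w_n^{2})$ so that $|w_{n+1}|=|w_n|^{2}+R$, reduce to the real iteration $t\mapsto t^{2}+R$ (which has no fixed point since $R>\tfrac14$), and then invoke the continuity, compactness, and product-measure iteration from Lemma~\ref{lem:generalpar} verbatim.
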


\begin{proof}
Take an arbitrary point $z \in \mathbb{C}$, let $c_1, c_2, c_3, ... , c_N \in K$ be a sequence of $N$ parameters such that for all $n\le N$
$$|f^n_{\omega}(z)| = |f^{n-1}_{\omega}(z)^2 + c_n| = |f^{n-1}_{\omega}(z)|^2 + |c_n|. $$ 
Recall that for iterations on the real line, with $f(x) = x^2 + R$ and $R > \frac{1}{4}$, 
we have  for all $x$ $$\lim_{n \rightarrow \infty} f^n(x) = \infty .$$ Since $|c_n| = R > 
\frac{1}{4}$ by our choice of the numbers $c_1, ..., c_N$, for a large enough $N$, we will have $|
f^N_{\omega} (z)| > R_0$. By  continuity and compactness arguments, used exactly  as in the proof of Lemma~\ref{lem:generalpar}, 
we see that one can show something more, that is

$$
\exists_N \exists_{\delta} \forall_z \mathbb{P} (\{ \omega \in \Omega :  |f^N_{\omega} (z)| > R_0 \}) > \delta.
$$
We finish the proof in exactly the same way as the proof of Lemma~\ref{lem:generalpar}.

\end{proof}

\

\bibliographystyle{amsplain}

\begin{thebibliography}{10}

\bibitem{BH} Bodil Branner, John H. Hubbard, \textit{The iteration of cubic polynomials Part II: Patterns and prapatterns} Acta Math., 169; 3-4, 229-325 (1992)

\bibitem{BBR} Rainer Brück, Matthias Büger, Stefan Reitz, \textit{Random iteration of polynomials of the form $z^2 + c$: connectedness of Julia sets.} Ergodic Theory and Dynamical Systems, 19, 1221-1231 (1999)

\bibitem{RB} Rainer Brück, \textit{Connectedness and stability of Julia sets of the composition of polynomials of the form $z^2 + c_n$} Journal of the London Mathematical Society, 61, 462-470 (2000)

\bibitem{B} Matthias Büger, \textit{On the composition of polynomials of the form $z^2 + c_n$.} Mathematische Annalen, 310, 661-683 (1998) 

\bibitem{COM} Mark Comerford \textit{Hyperbolic non-autonomous Julia sets}, Ergodic Theory and Dynamical Systems, 26, 353-377 (2006)

\bibitem{COM2} Mark Comerford \textit{Non-autonomous Julia sets with escaping critical points} Journal of Difference Equations and Applications, 17, 1813-1826 (2011)

\bibitem{FS} John Erik Fornæss , Nessim Sibony, \textit{Random iterations of rational functions}, Ergodic Theory and Dynamical Systems, 11, 687-708 (1991)

\bibitem{QIU} Zhimin Gong, Weiyuan Qiu, Ying Li, \textit{Connectedness of Julia sets for a quadratic random dynamical system}, Ergodic Theory and Dynamical Systems, 23, 1807-1815 (2003)

\bibitem{MSU} Volker Mayer, Bartłomiej Skorulski, Mariusz Urbański, \textit{Distance Expanding Random Mappings, Thermodynamical Formalism, Gibbs Measures and Fractal Geometry} Lecture Notes in Mathematics 2036, (2011)

\bibitem{McMullen} Curtis T. McMullen, \textit{Complex Dynamics and Renormalization} Princeton University Press (1994)



\end{thebibliography}

\end{document}